\newtheorem{assm}{Assumption}
\newtheorem{lema}{Lemma}
\newtheorem{remark}{Remark}
\newtheorem{thme}{Theorem}
\newenvironment*{proof}{{\it Proof.}}{\hfill $\square$\par}
\newtheorem{prope}{Proposition}
\newtheorem{coro}{Corollary}
\begin{document}

\title{Delay-Adaptive Boundary Control
 of Coupled Hyperbolic PDE-ODE Cascade Systems }

\author{Ji~Wang
       and Mamadou Diagne
\thanks{
J. Wang is with the Department of Automation, Xiamen University, Xiamen,
Fujian 361005, China (e-mail:jiwang9024@gmail.com).
}
\thanks{
M. Diagne is with the Department of Mechanical and Aerospace Engineering, University of California San Diego,  9500 Gilman Dr, La Jolla, CA 92093 (mdiagne@ucsd.edu).
}
}

\maketitle

\begin{abstract}
This paper presents a delay-adaptive boundary control scheme for a $2\times 2$ coupled linear hyperbolic PDE-ODE cascade system with an unknown and arbitrarily long input delay. To construct a nominal delay-compensated control law, assuming a known input delay, a three-step backstepping design is used. Based on the certainty equivalence principle, the nominal control action is fed with the estimate of the unknown delay, which is generated from a batch least-squares identifier that is updated by an event-triggering mechanism that evaluates the growth of the norm of the system states. As a result of the closed-loop system, the actuator and plant states can be regulated exponentially while avoiding Zeno occurrences. A finite-time exact identification of the unknown delay is also achieved except for the case that all initial states of the plant are zero. As far as we know, this is the first delay-adaptive control result for systems governed by heterodirectional hyperbolic PDEs. The effectiveness of the proposed design is demonstrated in the control application of a deep-sea construction vessel with cable-payload oscillations and subject to input delay.\end{abstract}

\begin{IEEEkeywords}
Hyperbolic PDEs, delay-adaptive control, event-triggered control, least-squares identifier.
\end{IEEEkeywords}


\section{Introduction}
\subsection{Boundary control of coupled hyperbolic PDEs}\label{sec:mo}
Systems of transport partial differential equations (PDEs) appear in many physical models, including road traffic \cite{Goatin2006,Yu2019Traffic,Zhang2017Necessary}, water management systems \cite{Diagne2017,Diagne2017b,Prieur2014Boundary, Prieur2008Robust}, {flow of fluids in oil drilling systems \cite{Hasan2016Boundary,Hasan2014Moving}}, and cable vibration dynamics \cite{J2020two,J2018Balancing}. As a result of the backstepping design \cite{Coron2013Local,Vazquez2011Backstepping}, the sliding mode control approach \cite{Lamare2015Control} and the proportional-integral (PI) controller design \cite{Tang2014}, the theoretical results on boundary control of coupled first-order linear hyperbolic PDEs have emerged in the last decade. The backstepping design
was further extended to that of a $n+1$ system in \cite{Meglio2013Stabilization}, and then to  a more general coupled
transport PDE system where the
number of PDEs in either direction is arbitrary  \cite{Hu2016Control}. Along the same lines,  studies on the design of an adaptive estimation framework have been proposed in \cite{Anfinsen2019Adaptivea, Anfinsen2019Adaptiveb} and extended to adaptive control in \cite{Anfinsen2019Adaptive}. However,  the problem of delay-adaptive control for hyperbolic PDEs has gone unanswered in all these last developments as traditional designs based on swapping identifiers, passive identifiers, and Lyapunov functions remain difficult to exploit for such systems.

\subsection{Delay-compensated control of finite- and  infinite-dimensional plants}
Time delays, which are well known to be detrimental to stability \cite{Guo2008},   often exist in practical control systems.
In order to compensate for arbitrarily long delays,  ``avant-garde" backstepping-based delay compensation techniques were first developed in  \cite{krstic2009delay,krstic2008Backstepping}. Bottom-line, the input delay is converted into a transport PDE as an infinite-dimensional representation of the actuator state.   For ODE plants, a PDE/ODE cascade system ensues from this substitute representation of the actuator state.  The method has also been used to compensate for the effect of sensor delays that oftentimes occur in  ODE plants.   In comparison to many results  \cite{Ahmed-Ali2013Global,Cacace2010,Germani2002}, which only estimate
plant states, the approach proposed in \cite{krstic2009delay,krstic2008Backstepping} enables estimation of
both the plant and the sensor states when designing a feedback loop. A number of results considering delays that are described by complex transport actuation paths for nonlinear ODE plants were developed in \cite{Diagnedelaya,Diagnedelayb} and the references therein.

While compensation for arbitrarily long delays is commonly
available for finite-dimensional systems, only very few examples for infinite-dimensional systems were presented, where one pioneering result is \cite{Krstic2009control} that is conceived using backstepping. In recent years, researchers from the PDE control community have shifted their attention to this topic, leading to many interesting developments that can be found in  \cite{Koga2020Delay,Hugoa,Hugob,Jie2020,Jie2019}. By treating the delay as a transport PDE, \cite{Krstic2011} presented the design of a boundary controller for a pure wave PDE with compensation of an arbitrarily long input delay while ensuring exponential stability for the closed-loop system. For coupled heterodirectional hyperbolic PDEs, in \cite{J2020Delay}, a delay-compensated control scheme was designed for a sandwich hyperbolic PDE in the presence of a sensor delay of arbitrary length. {In the same spirit}, \cite{Jie2022} proposed a distributed input delay compensation for traffic systems governed by coupled hyperbolic PDEs (see  \cite{Jie2021} as well). In addition to the continuous-in-time control law, on the basis of the event-triggered boundary control design of PDEs \cite{Espitia2018,Espitia2020,JiEvent2021}, an event-triggered delay-compensated boundary control law for coupled hyperbolic PDEs was presented in \cite{J2022Delay-compensated}. Although the above substantial results emblematize a  major step in the field,  the prior knowledge of the delay length is a mandatory weakening factor that mitigates their viability for many practical applications.

Lyapunov designs have been employed to develop delay-adaptive controllers for linear and nonlinear ODE plants \cite{bekiaris2012Adaptive,Bresch2010,Bresch2014,Zhu2015,Zhu2017,Krstic2009} via backstepping-based certainty-equivalence compensators. The primary idea behind these contributions is to estimate the unknown delay using input-output signals, and then adjust a pre-designed nominal controller based on estimated parameters in order to achieve convergence. In general, compared to other traditional parameter identifier methods like swapping or passive identifiers, the Lyapunov technique provides better transient performance properties.
Recently, the approach has been extended to linear reaction-diffusion PDEs with a boundary or a distributed delayed input \cite{Wang2021b,Wang2021}, where asymptotic convergence results are achieved. {In the realm  of advancing the design approach of \cite{Wang2021b,Wang2021}, a recent result has  achieved the design of Lyapunov-based delay-adaptive boundary control for a scalar Integral PDE \cite{wang2023delay}. As far as we are aware, the three preceding contributions are the sole results on delay-adaptive control for PDE plants.} The method in the present contribution is different with both the above delay-adaptive control results and traditional adaptive control designs for hyperbolic PDEs  \cite{Anfinsen2019Adaptive}. More precisely, our design relies on a triggered batch least-square identifier (BaLSI), a novel approach that was initially introduced in \cite{Karafyllis2019Adaptive,Karafyllis2018Adaptive}, which has at least two significant advantages over the traditional adaptive control approaches: guaranteeing {exponential regulation} of the states to zero, as well as { finite-time convergence} of the estimates to the true values. This method has been applied in adaptive control of a parabolic PDE \cite{Karafyllis2019Adaptive1}, and first-order hyperbolic PDEs in \cite{JiAdaptive2021,J2022Event,JiACC2021} with unknown plant parameters.
\subsection{Contributions}
Comparison with some results on boundary control of PDEs with arbitrarily long input delays is shown in Table \ref{tab:com1}.
\begin{itemize}
\item To the best of our knowledge, our result is the first delay-adaptive
compensator for coupled hyperbolic PDEs involving an unknown and arbitrarily large input delay.

\item Different with the delay-robust stabilizing feedback control design for coupled first-order hyperbolic PDEs that achieve robustness to small delays in actuation \cite{Auriol2018Delay}, the present contribution ensures {exact compensation} of the arbitrarily large unknown input delay.

\item The finite-time exact identification of the unknown delay is achieved. As a result, the exponential regulation, instead of the asymptotic one in \cite{Wang2021b,Wang2021}, is guaranteed in the closed-loop system. {Basically, after the finite-time identification of the unknown parameter, the delay-adaptive control signal is identical to the nominal control action (with known input delay), which ultimately improves substantially the resulting transient performance of the whole closed-loop system's dynamics. }
\end{itemize}
\begin{table}
\centering
\caption{Delay-compensated control of PDEs with arbitrarily long input delays}\label{tab:com1}
\begin{tabular}{|c|c|c|}

  \hline
   & Known delay & Unknown delay \\
    \hline
Parabolic PDEs & \cite{Koga2020Delay}\cite{Krstic2009control}\cite{Hugoa}\cite{Jie2020} &  \cite{Wang2021b}\cite{Wang2021}  \\
 \hline
Hyperbolic PDEs & \cite{Krstic2011}\cite{Jie2021}\cite{Jie2022}\cite{Jie2019} & This paper  \\
  \hline
\end{tabular}
\end{table}
 \subsection{Organization}
The problem formulation is shown in Section \ref{sec:problem}. The nominal control design is presented in Section \ref{sec:nominalcontrol}. The design of delay-adaptive control with piecewise-constant parameter identification is proposed in Section \ref{sec:adaptive}. The main result including the absence of a Zeno phenomenon, parameter convergence,  and exponential regulation of the states is proved in
 Section \ref{sec:sta}.  The effectiveness of the proposed design
is illustrated with a numerical simulation of a deep-sea construction vessel (DCV)  in Section \ref{sec:sim}. The conclusion and
future work are presented in Section \ref{sec:conclusion}.
\subsection{Notation} We adopt the following notation.
\begin{itemize}
\item The symbol $\mathbb Z^+$ denotes the set of natural numbers including zero, and the notation $\mathbb N$ for the set $\{1,2,\cdots\}$, i.e., the natural numbers without $0$. We also use $\mathbb R_+:=[0,+\infty)$.

\item Let $U\subseteq \mathbb R^m$ be a set with non-empty interior and let $\Omega\subseteq\mathbb R$
be a set. By $C^0 (U;\Omega)$, we denote the class of continuous
mappings on $U$, which takes values in $\Omega$. By $C^k (U;\Omega)$, where
$k \ge 1$, we denote the class of continuous functions on $U$,
which have continuous derivatives of order $k$ on $U$ and take
values in $\Omega$.

\item We use the notation $L^2(0, 1)$ for the standard space of the equivalence
class of square-integrable, measurable functions defined
on $(0, 1)$ and $\|f\|=\left(\int_0^1 f(x)^2 dx\right)^{\frac{1}{2}}<+\infty$ for $f \in L^2(0, 1)$.

\item For an $I\subseteq \mathbb R_+$, the space $C^0(I;L^2(0,1))$ is the space of continuous mappings $I\ni t\mapsto u[t]\in L^2(0,1)$.

\item Let $u: \mathbb R_+\times [0,1]\rightarrow \mathbb R$  be given. We use the notation $u[t]$ to denote the profile of $u$ at certain $t\ge 0$, i.e., $(u[t])(x)=u(x,t)$, for all $x\in[0,1]$.
\end{itemize}
\section{Problem Formulation}\label{sec:problem}
Consider  the potentially open-loop unstable plant governed by the following $2\times 2$ linear hyperbolic PDE coupled with  a linear ODE,
\begin{align}
\dot X (t)& = AX (t) + Bw(0,t) ,\label{eq:ode1}\\
{z_t}(x,t) &=  - q_1{z_x}(x,t)+{d_1}z(x,t)+{d_2}w(x,t),\label{eq:ode2}\\
{w_t}(x,t) &= q_2{w_x}(x,t)+{d_3}z(x,t)+{d_4}w(x,t),\label{eq:ode3}
\end{align}
with the boundary conditions:
\begin{align}
z(0,t)  &= CX (t)- pw(0,t),\label{eq:ode4}\\
w(1,t) &=  c_0U(t-D)+qz(1,t),\label{eq:ode5}
\end{align}
where, $q, q_1, q_2, d_1, d_2, d_3, d_4, c_0 $ and $p$ are arbitrary  parameters with $q_1,q_2> 0$ being transport speeds, and $p\neq 0$, $c_0\neq 0$. Here, the matrix  $A, B, C$ are known,  $z(x,t)$ and  $w(x,t)$  are the PDE state variables, $X(t)\in \mathbb R^m$ is the linear ODE state, $U$ is the control variable and $D>0$ is the indiscriminately large and unknown input delay. We assume that the initial conditions satisfy
\begin{align}
z^0(x), w^0(x)  \in L^2(0, 1),\ X^0\in \mathbb{R}^m
\end{align}
and consider the following assumptions.
\begin{assm}\label{as:AB}
The pair $A,B$ is controllable.
\end{assm}
\begin{assm}\label{as:pq}
Parameters $p,q$ satisfy
\begin{align}
{|pq|}{e^{  {\max\left\{\frac{2d_4}{q_2},\frac{2d_1}{q_1}\right\}}}}\le\frac{1}{\sqrt{2}}.
\end{align}
\end{assm}
\begin{assm}\label{as:coe1}
The bounds of the unknown input delay $D$ are  known
and  arbitrary, i.e.,
\begin{align}
0<\underline D\le D\le \overline D
\end{align}
where positive constants $\underline D$, $\overline D$ are arbitrary.
\end{assm}

Our goal is to design a delay-adaptive boundary control action, $U(t)$,  that exponentially regulates the system \eqref{eq:ode1}--\eqref{eq:ode5}  despite the presence of an unknown delay $D$ whose length is arbitrary. {The plant \eqref{eq:ode1}--\eqref{eq:ode5} {can be used to model}  cable-payload oscillations in DCV, which are to be suppressed for the purpose of accurate placement of the equipment to be installed on the sea floor. From this application perspective,  large-distance signal transmission  in the water through a set of acoustics
devices and  the actuation of the  hydraulic actuator for the ship-mounted crane are subject to delays, which are considered as an unknown delay in the control input, the cable vibration dynamics are governed by  the $2\times 2$ hyperbolic PDE, and  the vibration dynamics of the cage are captured by the ODE system.}
\section{Nominal delay-compensated control design}\label{sec:nominalcontrol}
In order to design the nominal control law, we first construct an infinite-dimensional representation of the actuator state by converting the delayed input into transport PDE actuation dynamics. Define a new variable $v(x,t)$ as
 \begin{equation*} v(x,t)= \begin{cases} U\left(t-{D}x\right)&\mbox{if $t-{D}x\ge0$}\\ 0&\mbox{if $t-{D}x<0$} \end{cases} \end{equation*}
then \eqref{eq:ode5} is rewritten as
\begin{align}
w(1,t) &=  c_0v(1,t)+qz(1,t),\label{eq:ode6}\\
v_t(x,t)&=-\frac{1}{D}v_x(x,t),\label{eq:ode7}\\
v(0,t)&=U(t), \label{eq:ode8}\\
v(x,0)&=0\label{eq:ode9}
\end{align}
for $x\in[0,1],t\in[0,\infty)$.
Now, resulting from the new representation of the actuator state, the function $U(t)$,  which is defined as the boundary condition \eqref{eq:ode8} of the transport equation \eqref{eq:ode7},  is the delay-free control input to be designed for the hyperbolic PDE-PDE-ODE cascade system consisting of \eqref{eq:ode1}--\eqref{eq:ode4} combined with \eqref{eq:ode6}--\eqref{eq:ode9}.

\subsection{First Step: Backstepping  Transformation for the $2\times2$ Coupled Hyperbolic PDE-ODE}
We introduce the following backstepping transformation \cite{Meglio2017Stabilization} in order to remove the in-domain coupling  destabilizing terms from  the $2\times 2$ hyperbolic PDE system consisting of \eqref{eq:ode2}, \eqref{eq:ode3} and make the ODE system matrix Hurwitz:
\begin{align}
\alpha (x,t) =&z(x,t) - \int_0^x {\phi}(x,y)z(y,t)dy\notag\\
& -  \int_0^x {\varphi}(x,y)w(y,t)dy -  \gamma (x){X}(t),\label{eq:contran1a}\\
\beta (x,t) =& w(x,t) - \int_0^x {\Psi}(x,y)z(y,t)dy \notag\\
&-  \int_0^x {\Phi}(x,y)w(y,t)dy -  \lambda (x){X}(t)\label{eq:contran1b}
\end{align}
whose inverse is
\begin{align}
z (x,t) =&\alpha(x,t) - \int_0^x {\bar\phi}(x,y)\alpha(y,t)dy\notag\\
& -  \int_0^x {\bar\varphi}(x,y)\beta(y,t)dy -  \bar\gamma (x){X}(t),\label{eq:Icontran1a}\\
w (x,t) =& \beta(x,t) - \int_0^x {\bar\Psi}(x,y)\alpha(y,t)dy \notag\\
&-  \int_0^x {\bar\Phi}(x,y)\beta(y,t)dy -  \bar\lambda (x){X}(t)\label{eq:Icontran1b}
\end{align}
to convert  \eqref{eq:ode1}--\eqref{eq:ode4}, \eqref{eq:ode6} into
\begin{align}
\dot X (t) =& A_{\rm m}X (t) + {B}\beta (0,t), \label{eq:targ5}\\
  \alpha (0,t) =& -p\beta  (0,t),\label{eq:targ2}\\
{\alpha _t}(x,t) =&  - {q_1}{\alpha _x}(x,t) +{d_1}\alpha (x,t),\label{eq:targ1}\\
{\beta _t}(x,t) =& {q_2}{\beta _x}(x,t) + {d_4}\beta (x,t),\label{eq:targ4}\\
\beta(1,t) =&  c_0v(1,t)+q\alpha(1,t)+  \left(\bar\lambda (1) -  q\bar\gamma (1)\right){X}(t)\notag\\
&+ \int_0^1 \left({\bar\Psi}(1,y)-q{\bar\phi}(1,y)\right)\alpha(y,t)dy\notag\\&+ \int_0^1 \left({\bar\Phi}(1,y)-q{\bar\varphi}(1,y)\right)\beta(y,t)dy. \label{eq:targ8}
\end{align}
The gain vector $K$ is selected so that
\begin{align}
A_{\rm m}=A+BK^T \label{eq:Am}
\end{align}
is Hurwitz.

The conditions on the kernels ${\phi}(x,y)$, ${\varphi}(x,y)$, $\gamma (x)$, ${\Psi}(x,y)$, ${\Phi}(x,y)$, $\lambda (x)$ and ${\bar\phi}(x,y)$, ${\bar\varphi}(x,y)$, $\bar\gamma (x)$, ${\bar\Psi}(x,y)$, ${\bar\Phi}(x,y)$, $\bar\lambda (x)$ in the backstepping transformations \eqref{eq:contran1a}--\eqref{eq:Icontran1b}, which are obtained by matching the original system \eqref{eq:ode1}--\eqref{eq:ode5} and the intermediate system \eqref{eq:targ5}--\eqref{eq:targ8},  are shown in the part 1 of Appendix A, and the well-posedness of the kernel conditions has been proved in Theorem 4.1 of \cite{Meglio2017Stabilization}.
\subsection{Second Step: {Transformation of the Actuator States}}
With the purpose of removing the  integral terms and ODE state $X(t)$ from the PDE boundary condition \eqref{eq:targ8}, we define the following change of coordinate
\begin{align}
u(x,t)=&v(x,t)+\int_0^1 K_1(x,y)\alpha(y,t)dy+\int_0^1 K_2(x,y)\beta(y,t)dy\notag\\&+\eta(x){X}(t)\label{eq:secondtr}
\end{align}
which enables one to  map  the actuator dynamics given by \eqref{eq:ode7},  {\eqref{eq:ode8}, and \eqref{eq:targ8} into the following equations}
\begin{align}
\beta(1,t) =&  c_0u(1,t)+q\alpha(1,t), \label{eq:tar9}\\
{u_t}(x,t) =&  - d{u_x}(x,t)+q_2K_2(x,1)c_0u(1,t), \label{eq:tar10}\\
u(0,t)=&U(t)+\int_0^1 K_1(0,y)\alpha(y,t)dy\notag\\&+\int_0^1 K_2(0,y)\beta(y,t)dy+\eta(0){X}(t),\label{eq:tar11}
\end{align}
where $$d=\frac{1}{D}.$$ The detailed computation and conditions of the kernels $K_1(x,y),K_2(x,y),\eta(x)$  are given in the part 2 of Appendix A.
\subsection{Third Step: Backstepping Transformation for the {Resulting $u$-PDE}  }
To remove the boundary nonlocal term $q_2K_2(x,1)c_0u(1,t)$ in the transport PDE  \eqref{eq:tar10}, we apply the following mapping
\begin{align}
u(x,t)=\hat u(x,t)+\int_x^1 R(x,y) \hat u(y,t)dy\label{eq:third}
\end{align}
which  converts \eqref{eq:tar9}--\eqref{eq:tar11} into
\begin{align}
\beta(1,t) =&  c_0\hat u(1,t)+q\alpha(1,t), \label{eq:tar12}\\
{\hat u_t}(x,t) =&  - {d}{\hat u_x}(x,t),\label{eq:tar13}\\
\hat u(0,t)=&0,\label{eq:tar14}
\end{align}
with the nominal control input  defined as
\begin{align}
U(t)=&-\int_0^1 K_1(0,y)\alpha(y,t)dy-\int_0^1 K_2(0,y)\beta(y,t)dy\notag\\&+\int_0^1 R(0,y) \hat u(y,t)dy-\eta(0){X}(t).\label{eq:targetinput}
\end{align}
The conditions of the kernel $R(x,y)$ are shown in  the part 3 of Appendix A. The inverse transformation of \eqref{eq:third} can be found as
\begin{align}
\hat u(x,t)= u(x,t)+\int_x^1 P(x,y) u(y,t)dy,\label{eq:thirdinv}
\end{align}
where the conditions of $P(x,y)$ are given in the part 3 of Appendix A as well.
\begin{figure*}
\centering
\includegraphics[width=18cm]{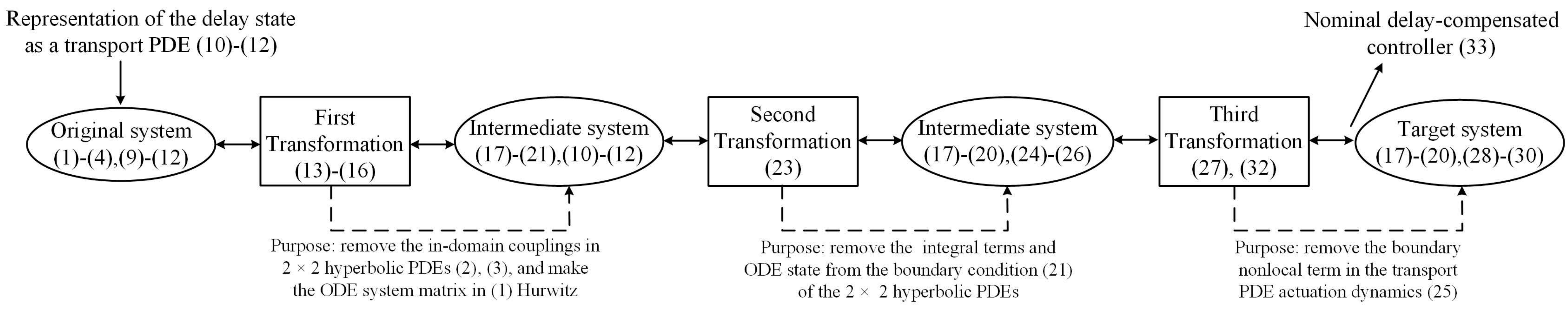}
\caption{The flow diagram of the nominal delay-compensated control design.}
\label{fig:controldesign}
\end{figure*}
\subsection{Stability result of nominal delay-compensated control}
The flow diagram of the nominal delay-compensated control is shown in Figure \ref{fig:controldesign}. In a nutshell,   the prior transformations convert the original system that consists of \eqref{eq:ode1}--\eqref{eq:ode4}, \eqref{eq:ode6}--\eqref{eq:ode9} into the target system that consists of \eqref{eq:targ5}--\eqref{eq:targ4}, \eqref{eq:tar12}--\eqref{eq:tar14}. {The nominal control input \eqref{eq:targetinput} }is rewritten with respect to  the original state variables as follows
\begin{align}
U(t)=&-\int_0^1 M_1(y;D)z(y,t)dy-\int_0^1 M_2(y;D)w(y,t)dy\notag\\&+\int_0^1 M_3(y;D)  v(y,t)dy+M_4(D) {X}(t),\label{eq:U}
\end{align}
where the controller gains $M_i$, $\{1,...,4\}$ are given in Appendix B.  Writing $D$ after $``;"$ in $M_i(y;D)$ emphasizes the fact that these functions are parameterized by the delay $D$.

The stability result of the nominal delay-compensated control is stated as follows.
\begin{thme}\label{theorem:1}
For the known delay $D$, with arbitrary initial data $(z[0],w[0])^T\in L^2(0,1)$, $X(0)\in \mathbb R^m$,  considering the closed-loop system consisting of the plant \eqref{eq:ode1}--\eqref{eq:ode4}, \eqref{eq:ode6}--\eqref{eq:ode9} and the nominal controller \eqref{eq:U}, the exponential stability of the closed-loop system is obtained in the sense that there exist positive constants $\Upsilon,\lambda_1$ such that
\begin{align}
\Omega(t)\le \Upsilon\Omega(0)e^{-\lambda_1 t},~~t\ge 0,\label{eq:thm1}
\end{align}
where $\Omega(t)$ is defined as
\begin{align}
\Omega(t)=\|z[t]\|^2+\|w[t]\|^2+\|v[t]\|^2+ |{X}(t)|^2. \label{eq:omeganorm}
\end{align}
\end{thme}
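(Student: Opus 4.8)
The plan is to exploit the chain of invertible backstepping transformations already established in the three-step design to transfer a stability estimate from the target system back to the original variables. The target system consists of the ODE \eqref{eq:targ5} driven by the boundary value $\beta(0,t)$, the transport equations \eqref{eq:targ1}, \eqref{eq:targ4} with the reflection boundary coupling \eqref{eq:targ2}, and the cascaded $\hat u$-transport system \eqref{eq:tar12}--\eqref{eq:tar14} with homogeneous inflow $\hat u(0,t)=0$. First I would prove exponential decay of the target-system norm $\Xi(t):=|X(t)|^2+\|\alpha[t]\|^2+\|\beta[t]\|^2+\|\hat u[t]\|^2$, and then close the argument by showing that $\Xi(t)$ and $\Omega(t)$ defined in \eqref{eq:omeganorm} are equivalent (bounded above and below by constant multiples of one another) because the transformations \eqref{eq:contran1a}--\eqref{eq:Icontran1b}, \eqref{eq:secondtr}, \eqref{eq:third}, \eqref{eq:thirdinv} are bounded linear maps with bounded inverses on $L^2$, owing to the boundedness and continuity of all the kernels guaranteed in Appendix A.

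For the decay of $\Xi(t)$ I would argue in the natural cascade order, which is essentially a finite-time settling followed by exponential absorption. The $\hat u$-equation \eqref{eq:tar13}--\eqref{eq:tar14} is a pure transport with zero inflow, so $\hat u[t]\equiv 0$ for $t\ge D$ (i.e. $t\ge 1/d$); hence $\beta(1,t)=q\alpha(1,t)$ after that finite time via \eqref{eq:tar12}. The $\alpha$- and $\beta$-transport equations then form a closed $2\times2$ system with the reflection conditions $\alpha(0,t)=-p\beta(0,t)$ and $\beta(1,t)=q\alpha(1,t)$. To get exponential decay of $\|\alpha[t]\|^2+\|\beta[t]\|^2$ I would construct a weighted Lyapunov functional of the standard form
\begin{align}
V_1(t)=\int_0^1 \left(a\,e^{-\delta x}\alpha(x,t)^2+b\,e^{\delta x}\beta(x,t)^2\right)dx,\notag
\end{align}
differentiate along \eqref{eq:targ1}, \eqref{eq:targ4}, integrate by parts, and choose the weights $a,b>0$ and $\delta>0$ so that the boundary terms at $x=0$ and $x=1$ combine with the reflection coefficients $p,q$ into a negative-definite quadratic form. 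This is exactly where \textbf{Assumption \ref{as:pq}} enters: the condition $|pq|\,e^{\max\{2d_4/q_2,\,2d_1/q_1\}}\le 1/\sqrt2$ is what makes the product of the two boundary reflections strictly contractive even after accounting for the in-domain growth rates $d_1,d_4$, so that the boundary contribution can be dominated. Finally, with $\beta(0,t)$ decaying exponentially and $A_{\rm m}$ Hurwitz by \eqref{eq:Am} and \textbf{Assumption \ref{as:AB}}, the ODE \eqref{eq:targ5} yields exponential decay of $|X(t)|$ by a standard input-to-state / variation-of-constants estimate, which I would absorb into $V_1$ by adding a term $X^\top S X$ with $S$ solving the Lyapunov equation $A_{\rm m}^\top S+S A_{\rm m}=-I$.

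The main obstacle I anticipate is the coupled boundary/Lyapunov bookkeeping in the $(\alpha,\beta)$ subsystem: making the weighted-norm boundary terms at both endpoints simultaneously negative requires threading the inequality in Assumption \ref{as:pq} through the choice of $a$, $b$, and $\delta$, and the exponential factor $e^{\max\{2d_4/q_2,2d_1/q_1\}}$ shows the margin is tight. A cleaner route that sidesteps delicate weight-tuning is to use the explicit method of characteristics on the transport equations to derive a discrete delay-recursion for the boundary traces: tracing $\alpha$ and $\beta$ along characteristics shows $\beta(0,t)$ at one time is proportional to $pq$ times its value one round-trip earlier (times the growth factors accumulated in transit), so the round-trip gain is $|pq|\,e^{\,d_1/q_1+d_4/q_2}$, and Assumption \ref{as:pq} forces this below $1/\sqrt2<1$, giving geometric decay of the traces and hence of the $L^2$ norms. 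Either way, once $\Xi(t)\le \Upsilon'\,\Xi(0)e^{-\lambda_1 t}$ is established, the norm equivalence between $\Xi$ and $\Omega$ delivers \eqref{eq:thm1} directly, with $\Upsilon$ collecting the transformation-bound constants. I would also note that the $\|v[t]\|^2$ term in $\Omega$ is controlled through the transformation \eqref{eq:secondtr} together with \eqref{eq:third}, so no separate estimate for $v$ is needed beyond the invertibility of those maps.
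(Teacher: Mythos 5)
Your proposal follows essentially the same route as the paper: pass to the target system via the three backstepping transformations, invoke the norm equivalence between $\Omega(t)$ and the target-system norm (the paper's Appendix C), and prove decay with a weighted Lyapunov functional of exactly the form you write, with Assumption \ref{as:pq} entering through the choice of $\delta$ and the weights so that the boundary reflection terms at $x=0$ and $x=1$ are dominated (conditions \eqref{eq:delta}--\eqref{eq:rd}). The only organizational difference is that the paper keeps the $\hat u$-term and $X^TP_1X$ inside a single Lyapunov functional $V$ in \eqref{eq:V} rather than using your finite-time-settling/ISS cascade (which would additionally require a bookkeeping bound on the transient over $[0,D]$ to recover the estimate for all $t\ge 0$), but this does not change the substance of the argument.
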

\begin{proof}
Define Lyapunov function $V(t)$ as
\begin{align}
V(t) =&  \frac{r_d}{2}{X}^T(t)P_1X(t) + \frac{r_a}{2}{}\int_0^1 {{e^{{\delta }x}}} \beta {(x,t)^2}dx \notag\\
&+ \frac{1}{2}{}\int_0^1 {{e^{ - {\delta}x}}} \alpha {(x,t)^2}dx+ \frac{r_c}{2}{}\int_0^1 {{e^{ - {}x}}} \hat u {(x,t)^2}dx,\label{eq:V}
\end{align}
where a positive definite matrix $P_1 = {P_1}^T$ is the solution to the
Lyapunov equation $A_{\rm m}^TP_1+P_1A_{\rm m}=-Q_1$ for some $Q_1={Q_1}^T>0$, and where $\delta,r_a,r_c,r_d$ satisfy
 \begin{align}
&{e^{ - 2{\max\left\{\frac{2d_4}{q_2},\frac{2d_1}{q_1}\right\}}}}>{e^{ - 2{\delta}}}>2{p^2q^2},\label{eq:delta}\\
&\frac{q_1}{2q_2q^2}{e^{ - 2{\delta}}}\ge {r_a}>\frac{p^2q_1}{q_2},\label{eq:ra}\\
&{{r_c}}\ge 2\bar Dq_2 {r_a}{e^{{\delta}}}c_0^2,\label{eq:rc}\\
&0<r_d\le\frac{{\lambda_{\rm min}(Q_1)}}{2|P_1B|^2}\left(q_2 {r_a}-{p^2q_1}\right).\label{eq:rd}
\end{align}
Please note that ${e^{ - 2{\max\left\{\frac{2d_4}{q_2},\frac{2d_1}{q_1}\right\}}}} >2{p^2q^2}$ holds in \eqref{eq:delta} under Assumption \ref{as:pq}, and $\frac{q_1}{2q_2q^2}{e^{ - 2{\delta}}}>\frac{p^2q_1}{q_2}$ holds in \eqref{eq:ra} due to the right inequality in \eqref{eq:delta}, which means the existence of  $\delta,r_a$ satisfying \eqref{eq:delta}, \eqref{eq:ra}. It is then straightforward to obtain $r_c,r_d$ by \eqref{eq:rc}, \eqref{eq:rd}, where the positiveness of the right-hand side of \eqref{eq:rd} is ensured by the right inequality in \eqref{eq:ra}. Therefore, there exists a solution $\delta,r_a,r_c,r_d$ satisfying \eqref{eq:delta}--\eqref{eq:rd}.

Following the calculation in {Appendix C} and using the norm estimate \eqref{equi},  we have
\begin{align}
\xi_1\xi_3\Omega(t)\le V(t)\le \xi_2\xi_4\Omega(t)\label{eq:barOmega}
\end{align}
where $\xi_1,\xi_2$ are given in \eqref{eq:xi1}, \eqref{eq:xi2}, and
\begin{align}
\xi_3&=\frac{1}{2}\min\left\{{r_d}\lambda_{\rm min}(P_1),{r_a}, {{e^{ - {\delta}}}},{r_c} {{e^{ - {1}}}}\right\},\\
\xi_4&=\frac{1}{2}\max\left\{{r_d}\lambda_{\rm max}(P_1),{r_a}{{e^{{\delta }}}},1,{r_c}\right\},
\end{align}
where ${\lambda_{\rm min}(P_1)}$ {is the smallest eigenvalue of $P_1$}.

Taking the derivative of \eqref{eq:V} along \eqref{eq:targ5}--\eqref{eq:targ4}, \eqref{eq:tar12}--\eqref{eq:tar14},
and applying Young's inequality and the Cauchy-Schwarz inequality,  the following estimate holds for all $t\ge 0$:
\begin{align}
\dot V(t)  \le &-\frac{r_d}{4}{\lambda_{\rm min}(Q_1)}|{X}(t)|^2\notag\\&- \bigg(\frac{{1}}{2}q_1{e^{ - {\delta}}}-q_2 {r_a}{e^{{\delta}}}q^2\bigg)\alpha (1,t)^2 \notag\\
&- \left(\frac{1}{2}q_2 {r_a}-\frac{r_d|P_1B|^2}{{\lambda_{\rm min}(Q_1)}}-\frac{p^2q_1}{2} \right)\beta {(0,t)^2}\notag\\&- r_a\left(\frac{1}{2}{\delta}q_2 -d_4\right)\int_0^1 {{e^{{\delta}x}}} \beta {(x,t)^2}dx \notag\\
 & - \left(\frac{1}{2}{\delta}q_1-d_1\right)\int_0^1 {{e^{ - {\delta}x}}} \alpha {(x,t)^2}dx\notag\\
 &-\left(\frac{{r_c}}{2D}-q_2 {r_a}{e^{{\delta}}}c_0^2\right)\hat u {(1,t)}^2\notag\\&-\frac{{r_c}}{2D}\int_0^1 {{e^{{}x}}} \hat u {(x,t)^2}dx.\label{eq:dV1}
 \end{align}
{Recalling  conditions \eqref{eq:delta}--\eqref{eq:rc} on  ${\delta}$, $r_a$, $r_c$, and $r_d$, there exists a sufficiently small positive constant $\lambda_1$, such that}
\begin{align}
\dot V(t)\le -\lambda_1 V(t),\label{eq:dV2}
\end{align}
where
\begin{align}
{\lambda_1=\min\left\{\frac{{\lambda_{\rm min}(Q_1)}}{2\lambda_{\rm max}(P_1)},{\delta}q_2 -2d_4,{\delta}q_1-2d_1,\frac{1}{D}\right\}>0}.\label{eq:lam1}
\end{align}
Recalling \eqref{eq:barOmega}, we then obtain \eqref{eq:thm1}
{where the positive constant $\Upsilon$ is given as }
\begin{align}
\Upsilon=\frac{\xi_2\xi_4}{\xi_1\xi_3}.\label{eq:Upsilon}
\end{align}
The proof of the theorem is complete.
\end{proof}

Next, we will design a certainty equivalence delay-adaptive controller considering the nominal control action \eqref{eq:U} fed with an estimate $\hat D$,  which is given by an update law resulting from a triggered batch least-square identifier of the unknown delay $D$.
\section{ Delay-Adaptive controller}\label{sec:adaptive}
By replacing the unknown delay $D$ in the  nominal continuous-in-time feedback \eqref{eq:U} with the estimate $\hat D(t_i)$ that is generated with a triggered batch least-squares identifier that is designed later, we derive the following
delay-compensated adaptive  control law $U_{\rm d}$:
\begin{align}
 U_{\rm d} (t)= & \int_0^1 M_1(y;\hat D(t_i))z(y,t)dy+\int_0^1 M_2(y;\hat D(t_i))w(y,t)dy\notag\\&+\int_0^1 M_3(y;\hat D(t_i))  v(y,t)dy\notag\\&+M_4(\hat D(t_i))X(t), \quad t\in[t_i,t_{i+1}),\label{eq:dU}
\end{align}
where  $\{t_i\ge 0\}_{i=0}^{\infty}$, $i\in \mathbb Z^+$ is the sequence of time instants at which the delay estimate is updated. The triggering mechanism that determines the sequence of time instants is shown next.
\subsection{Triggering Mechanism}
The sequence of time instants $\{t_i\ge 0\}_{i=0}^{\infty}$, $i\in \mathbb Z^+$ ($t_0=0$), is
defined as
\begin{align}
&t_{i+1}=\begin{cases} \min\bigg\{\inf\big\{t>t_i:\Omega(t)=(1+a)\hat\Upsilon({\hat D(t_i)})\Omega(t_i)\big\},\\~~~~~~t_i+T\bigg\},~~~~for~\mbox{ $\Omega(t_i)\neq 0$}\\ t_i+T,~~~~~~~~~~~~for~\mbox{$\Omega(t_i)=0$} \end{cases}\label{eq:ri}
\end{align}
where the design parameters $a$, $T$ are positive and free, and the function $\Omega(t)$ is given in \eqref{eq:omeganorm}. {The function  ${\hat\Upsilon({\hat D(t_i)}})$ is  the overshoot coefficient
that is associated with the system transient and is obtained by replacing the unknown $D$ with $\hat D(t_i)$ in  $\Upsilon$ which is defined in \eqref{eq:Upsilon} (please note that $\xi_1$ and $\xi_2$  in \eqref{eq:Upsilon} depend on the delay $D$  through the delay-dependent kernel functions $K_1,K_2,\eta,R,P$ included in   \eqref{eq:xi1} and  \eqref{eq:xi2}. See Appendix C for further details). }

\subsection{Least-squares identifier for the unknown delay}\label{sec:ls}
Now, we design the identifier which stands as the update law of the estimated delay $\hat D$. According to   \eqref{eq:ode7},  for $\tau>0$ and $ n=1,2,\cdots$, the following equality holds:
\begin{align}
&\quad D\frac{d}{d\tau}\int_0^{1}\sin({x\pi n})v(x,\tau)dx={\pi n}\int_0^{1}\cos({x\pi n})v(x,\tau)dx.\label{eq:Ls1}
\end{align}
Integrating \eqref{eq:Ls1} from $0$ to $t$, yields
\begin{align}
D\int_0^{1}\sin({x\pi n})v(x,t)dx={\pi n}\int_{0}^{t}\int_0^{1}\cos({x\pi n})v(x,\tau)dxd\tau\label{eq:ab}
\end{align}
where \eqref{eq:ode9} has been recalled. Straightforwardly, \eqref{eq:ab} can be  written as
\begin{align}
&f_n(t)=Dg_{n}(t),\label{eq:f}
\end{align}
where
\begin{align}
&f_n(t)={\pi n}\int_{0}^{t}\int_0^{1}\cos({x\pi n})v(x,\tau)dxd\tau, \label{eq:fn}\\
&g_ {n}(t)=\int_0^{1}\sin({x\pi n})v(x,t)dx,\label{eq:gn}
\end{align}
for $n\in\mathbb N$. Define the function $h_{i,n}$ by the formula:
\begin{align}
h_{i,n}(\ell)=&\int_{\mu_{i+1}}^{t_{i+1}}(f_n(t)-\ell g_{n}(t))^2dt,~i\in  \mathbb Z^+, n\in\mathbb N, \label{eq:hi}
\end{align}
and  time instant $\mu_{i+1}$  as
\begin{align}
\mu_{i+1}:=\min\{{t_g}:g\in\{0,\ldots,i\},t_g\ge t_{i+1}-\tilde N T\},\label{eq:mui}
\end{align}
where the positive integer $\tilde N\ge 1$ is a free design parameter (in practice, a lager $\tilde N$ means a bigger set of data used in the least-squares
identifier, which makes the
identifier more robust with respect to measurement errors), and where the positive constant $T$ is the maximum dwell time according to \eqref{eq:ri}.
From \eqref{eq:f}, one can deduce that  the function $h_{i,n}(\ell)$ in \eqref{eq:hi} has a global minimum $h_{i,n}(D)=0$. Then, using Fermat's theorem (vanishing gradient at extrema),  the following matrix equation hold for every $i\in  \mathbb Z^+$ and $n\in\mathbb N$:
\begin{align}
H_{n}(\mu_{i+1},t_{i+1})=G_n(\mu_{i+1},t_{i+1})D\label{eq:Fer}
\end{align}
where
\begin{align}
H_{n}(\mu_{i+1},t_{i+1})&=\int_{\mu_{i+1}}^{t_{i+1}}g_{n}(t)f_{n}(t) dt,\label{eq:H1m}\\
G_{n}(\mu_{i+1},t_{i+1})&=\int_{\mu_{i+1}}^{t_{i+1}}g_{n}(t)^2 dt.\label{eq:Q1m}
\end{align}
Indeed, \eqref{eq:Fer} is obtained by differentiating the functions $h_{i,n}(\ell)$ defined by \eqref{eq:hi} with respect to $\ell$, and evaluating the derivative (zero) at the global minimum $\ell=D$. Using \eqref{eq:Fer}--\eqref{eq:Q1m}, the following delay identifier is constructed:
\begin{align}
&\hat D(t_{i+1})={\rm argmin}\bigg\{|\ell-\hat D(t_i)|^2: {\underline D\le \ell\le \overline D}, \notag\\
&H_{n}(\mu_{i+1},t_{i+1})=G_n(\mu_{i+1},t_{i+1})\ell,~~ n=1,2,\cdots\bigg\},~~i\in\mathbb Z^+.\label{eq:adaptivelaw}
\end{align}
\begin{remark}[Implementation of the identifier] Implementation of the identifier begins with calculating $H_{n}(\mu_{i+1},t_{i+1})$, $G_n(\mu_{i+1},t_{i+1})$ from $n=1$, $i=0$, i.e., $H_1(\mu_{1}, t_{1})$, $G_1(\mu_{1}, t_{1})$, using \eqref{eq:H1m}, \eqref{eq:Q1m}, \eqref{eq:fn}, \eqref{eq:gn}. If $G_1(\mu_{1}, t_{1})\neq0$, it implies that $\ell$ belongs to a singleton set, i.e.,  $\ell=\frac{H_{1}(\mu_{1},t_{1})}{G_1(\mu_{1},t_{1})}$. It is followed that the output of the identifer \eqref{eq:adaptivelaw} at $t_1$ is $\hat D(t_{1})=\frac{H_{1}(\mu_{1},t_{1})}{G_1(\mu_{1},t_{1})}$. If $G_1(\mu_{1}, t_{1})=0$, we continue  to calculate $H, G$ with $n=2$, $i=0$, i.e., $H_2(\mu_{1}, t_{1})$, $G_2(\mu_{1}, t_{1})$, and then evaluate the value of $G_2(\mu_{1}, t_{1})$. Similarly, if  $G_2(\mu_{1}, t_{1})\neq0$, the output of the identifier at $t_1$  is $\hat D(t_1)=\frac{H_{2}(\mu_{1},t_{1})}{G_2(\mu_{1},t_{1})}$. If $G_2(\mu_{1}, t_{1})=0$, then move to calculate the case of $n=3$, $i=0$, i.e., $H_3(\mu_{1}, t_{1})$, $G_3(\mu_{1}, t_{1})$. Repeating the above steps, until we find a $G_n(\mu_{1}, t_{1})\neq 0$ for a certain $n$,  the output of the identifier at $t_1$  is  $\hat D(t_1)=H_n(\mu_{1}, t_{1})/G_n(\mu_{1}, t_{1})$. For saving the computation time, we can set an upper limit $\bar n$ for $n$. That is, if $G_n(\mu_{1}, t_{1})=0$  for all $n=1,\cdots,\bar n$, we then stop the seeking at the updating time $t_1$ and consider $\ell$ belongs to the original set $\{\ell\in\mathbb R:\underline D\le \ell\le \overline D\}$ which leads to the output of the identifier is equal to the estimate at the last time instant, i.e., $\hat D(t_1)=\hat D(t_0)$, according to \eqref{eq:adaptivelaw}. The same computation process is followed for the subsequent updating time instants $t_2,t_3,\ldots$. For  many practical applications, such as simulating a deep-sea construction vessel, locating non-zero values of $G_n(\mu_{i+1}, t_{i+1})$ is a straightforward task following the algorithm described above.  \label{rm:impid}
\end{remark}

{Please note that even though the actuator states $v(x,t)$ are measurable in this full-state feedback case, for the delay estimation, one cannot adopt the ``naive" method--that is, taking the time and spatial derivatives of the signal $v(x,t)$  to calculate $d$ in \eqref{eq:ode7} straightforwardly-- because of the following two reasons:} 1) taking the time derivative of the measured signals always leads to the undesired noise amplification in practice; 2) the possible zero values of $v_t(x,t)$ accompanied with the unknown delay $D$ will {engender} singularity.
{\begin{prope}[Existence of solution in an interval]\label{Pro:1}
For every $(z[t_i],w[t_i],v[t_i])^T\in L^2((0,1);\mathbb R^3)$, $X(t_i)\in \mathbb R^m$,  there exists  a unique (weak) solution $((z,w,v)^T,X)\in C^0([t_i,t_{i+1}];L^2(0,1);\mathbb R^3)\times C^0([t_i,t_{i+1}];\mathbb R^m)$ to  the system \eqref{eq:ode1}--\eqref{eq:ode4}, \eqref{eq:ode6}--\eqref{eq:ode9}, \eqref{eq:dU}.
\end{prope}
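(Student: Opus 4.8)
The plan is to exploit the fact that on each interval $[t_i,t_{i+1})$ the delay estimate $\hat D(t_i)$ is frozen to a constant, so that the adaptive law \eqref{eq:dU} reduces to a \emph{bounded, linear, time-invariant} state feedback. Once $\hat D(t_i)$ is fixed, the kernels $M_1,\dots,M_4$ in \eqref{eq:dU} are fixed functions of $y$, and they are bounded: this follows from the explicit formulas in Appendix B together with the well-posedness (continuity and boundedness) of the backstepping kernels $K_1,K_2,\eta,R,P$ established in Theorem 4.1 of \cite{Meglio2017Stabilization}. Consequently the map $(z[t],w[t],v[t],X(t))\mapsto U_{\rm d}(t)$ is a bounded linear functional on the state space $\mathcal H:=L^2((0,1);\mathbb R^3)\times\mathbb R^m$, uniformly on $[t_i,t_{i+1})$, and the problem is reduced to well-posedness of a linear PDE-PDE-ODE cascade with a bounded boundary feedback.

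First I would recast the closed-loop plant \eqref{eq:ode1}--\eqref{eq:ode4}, \eqref{eq:ode6}--\eqref{eq:ode9}, \eqref{eq:dU} as an abstract Cauchy problem $\dot\Xi=\mathcal A\Xi$, $\Xi(t_i)=(z[t_i],w[t_i],v[t_i],X(t_i))^T\in\mathcal H$, with $\Xi=(z,w,v,X)^T$, where the unbounded operator $\mathcal A$ carries the three transport operators together with the in-domain couplings $d_1,\dots,d_4,B,C$ and the matrix $A$, and where all boundary data are encoded in $D(\mathcal A)$. The three characteristic fields are well separated: $z$ and $v$ transport to the right (inflow at $x=0$) while $w$ transports to the left (inflow at $x=1$), so the traces prescribed by \eqref{eq:ode4}, \eqref{eq:ode6}, \eqref{eq:ode8} are exactly the inflow data needed for well-posedness. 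The cleanest route is then to build the solution by the method of characteristics: integrating \eqref{eq:ode2}--\eqref{eq:ode3} and \eqref{eq:ode7} along their characteristics turns each PDE into an integral equation whose inflow values are supplied by the remaining states through \eqref{eq:ode4}, \eqref{eq:ode6}, \eqref{eq:ode8}--\eqref{eq:dU}, while the ODE \eqref{eq:ode1} is resolved by variation of constants. I would assemble these into a single solution map $\Phi$ on $C^0([t_i,t_i+\tau];\mathcal H)$ and show that for $\tau$ small enough $\Phi$ is a contraction, the in-domain reaction terms and the boundary/feedback couplings each contributing a factor that can be made smaller than one on a short horizon. Banach's fixed-point theorem then gives a unique local weak solution, whose $C^0$-in-time regularity follows from the $L^2$-continuity of translation along characteristics.

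As an equivalent abstract alternative, one may instead verify directly that $\mathcal A$ generates a $C_0$-semigroup: after the shift $\mathcal A-\omega I$ and in the weighted inner product induced by the exponential weights $e^{\pm\delta x}$ of \eqref{eq:V}, dissipativity can be arranged, and surjectivity of $\lambda I-\mathcal A$ for large $\lambda$ is checked by solving the resulting two-point boundary-value problem; the bounded in-domain couplings and the bounded boundary feedback are then absorbed via the bounded-perturbation theorem for generators.

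The step I expect to be the main obstacle is closing the boundary loop. The three inflow traces are coupled to one another, and through \eqref{eq:dU} the trace $v(0,t)=U_{\rm d}(t)$ depends \emph{nonlocally} on the whole current profile $(z[t],w[t],v[t])$ and on $X(t)$, so one must check that this coupling is causal (no instantaneous algebraic loop) and contractive on short intervals. Causality holds because the interior value $v(y,t)$ for $y\in(0,1]$ is fixed by inflow data strictly earlier than $t$, so $U_{\rm d}(t)$ never feeds back on the instantaneous value it sets; likewise $w(0,t)$ is an outflow trace determined by past data, which in turn fixes $X(t)$ through \eqref{eq:ode1} and then $z(0,t)$ through \eqref{eq:ode4}. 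The requisite contraction/admissibility estimate then rests precisely on the uniform kernel bounds of the first paragraph. Finally, since the system is linear, the local solution cannot blow up in finite time, so iterating the construction extends it to the entire interval $[t_i,t_{i+1}]$, which is the asserted conclusion.
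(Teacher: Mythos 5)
Your proposal is correct, but it takes a genuinely different route from the paper. The paper's own proof is essentially a reduction-plus-citation: it applies the three backstepping transformations of Section III to map the closed-loop plant into the decoupled target system, invokes the well-posedness result recalled in the Appendix of \cite{Davo2018Stability}, and refers to the proof of the analogous proposition in \cite{JiAdaptive2021} for the details; well-posedness of the original system then follows from the boundedness and invertibility of the transformations (whose kernels are well-posed by Theorem 4.1 of \cite{Meglio2017Stabilization} and Appendix A). You instead work directly on the untransformed closed-loop system, using the key observation that on $[t_i,t_{i+1})$ the frozen estimate $\hat D(t_i)$ turns \eqref{eq:dU} into a fixed bounded linear functional of the state, and then building the solution by characteristics plus a short-horizon contraction (or, equivalently, by semigroup generation and the bounded-perturbation theorem). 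Your route is self-contained and does not need the backstepping machinery at this stage, at the price of verifying by hand the two points you correctly identify as delicate: the absence of an instantaneous algebraic loop in the nonlocal boundary feedback (which holds because the integrals in \eqref{eq:dU} do not see the single trace value $v(0,t)$ they determine), and the $L^2_{\rm loc}$-in-time hidden regularity of the outflow traces $w(0,\cdot)$, $z(1,\cdot)$, $v(1,\cdot)$ that feed the ODE and the inflow conditions --- you flag this as an admissibility estimate but do not carry it out in full. The paper's approach outsources exactly these verifications to \cite{Davo2018Stability} and buys brevity; since the transformations are needed anyway for the stability analysis, nothing is wasted there. Either route yields the stated conclusion, including the uniform-in-data local existence time that lets you iterate up to $t_{i+1}$.
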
}

\begin{proof} Similar to the proof of Proposition \ref{Pro:1} in \cite{JiAdaptive2021}, applying three transformations to decouple PDEs and ODE, and recalling the result in the part 1 in Appendix of \cite{Davo2018Stability}, Proposition \ref{Pro:1} can be obtained. {We direct the reader} to the proof of Proposition \ref{Pro:1} in \cite{JiAdaptive2021} for details.
\end{proof}

\section{Main result}\label{sec:sta}
{Before presenting the main theorem, we propose the following technical lemmas,} where when we say that $v(x,t)$ is equal to zero for $x\in[0,1],t\in[\mu_{i+1},t_{i+1}]$, or not identically zero on the same domain, we mean except possibly for finitely many discontinuities of the functions $v(x,t)$. These discontinuities are isolated curves in the rectangle $[0,1]\times[\mu_{i+1},t_{i+1}]$.

\begin{lema}[$G_{n}(\mu_{i+1},t_{i+1})=0$]\label{cl:nsQ0}
The sufficient and necessary condition of $G_{n}(\mu_{i+1},t_{i+1})=0$ for all $n\in\mathbb N$ is $v[t]= 0$ on $t\in[\mu_{i+1},t_{i+1}]$.

\end{lema}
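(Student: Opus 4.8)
The plan is to treat the two implications separately; the sufficiency is immediate and the necessity carries all the content. For sufficiency, suppose $v[t]=0$ on $[\mu_{i+1},t_{i+1}]$. Then by \eqref{eq:gn} each moment $g_n(t)=\int_0^1\sin(\pi n x)v(x,t)dx$ vanishes for all $t$ in the interval, so $G_n(\mu_{i+1},t_{i+1})=\int_{\mu_{i+1}}^{t_{i+1}}g_n(t)^2dt=0$ for every $n\in\mathbb N$ by \eqref{eq:Q1m}; no estimate is needed.

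For necessity, I would first record that each moment $g_n(\cdot)$ is continuous on $[\mu_{i+1},t_{i+1}]$. Proposition \ref{Pro:1} furnishes $v\in C^0([t_j,t_{j+1}];L^2(0,1))$ on each subinterval, and since $\sin(\pi n x)$ is a fixed element of $L^2(0,1)$, the map $t\mapsto g_n(t)=\int_0^1\sin(\pi n x)v(x,t)dx$ inherits this continuity; the jump discontinuities of $v$ propagate only along the finitely many characteristic curves excluded by the stated convention and are averaged out by the spatial integration, so $g_n$ remains continuous across the update times as well. Consequently, $G_n(\mu_{i+1},t_{i+1})=0$ with the nonnegative continuous integrand $g_n(\cdot)^2$ forces $g_n(t)=0$ for \emph{every} $t\in[\mu_{i+1},t_{i+1}]$, and this for \emph{every} $n\in\mathbb N$.

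The crux is to convert vanishing of all moments into vanishing of $v$ itself, and here I would invoke completeness of the Dirichlet sine system $\{\sqrt 2\,\sin(\pi n x)\}_{n\in\mathbb N}$ as an orthonormal basis of $L^2(0,1)$. Fixing any $t\in[\mu_{i+1},t_{i+1}]$, the numbers $g_n(t)$ are, up to the factor $\sqrt 2$, exactly the Fourier sine coefficients of $v[t]$, so Parseval's identity gives $\|v[t]\|^2=2\sum_{n\ge1}g_n(t)^2=0$, whence $v[t]=0$ in $L^2(0,1)$. Since the argument applies at each $t$ in the interval, $v[t]=0$ on $[\mu_{i+1},t_{i+1}]$ in the sense of the convention, which is the assertion.

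I expect the only delicate point to be the joint quantifier handling: one must secure a \emph{single} set of times on which all coefficients $g_n$ vanish simultaneously, rather than a separate null set for each $n$. The continuity of each $g_n$ resolves this cleanly by upgrading $G_n=0$ to the pointwise identity $g_n\equiv 0$ on the whole interval, after which completeness is a one-line consequence; alternatively one could argue measure-theoretically, taking the countable union of the $n$-indexed null sets, but then more care with the discontinuity convention is required. The remaining ingredients --- that a nonnegative continuous function with zero integral is identically zero, and the $L^2$-continuity of $v[t]$ from Proposition \ref{Pro:1} --- are routine and require no delicate estimates.
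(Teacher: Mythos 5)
Your proposal is correct and follows essentially the same route as the paper: continuity of each $g_n$ (inherited from $v\in C^0([t_i,t_{i+1}];L^2(0,1))$ via Proposition \ref{Pro:1}) upgrades $G_n(\mu_{i+1},t_{i+1})=0$ to $g_n\equiv 0$ on the interval, and completeness of the orthonormal sine basis $\{\sqrt{2}\sin(\pi n x)\}$ then forces $v[t]=0$; sufficiency is immediate from \eqref{eq:gn} and \eqref{eq:Q1m}. The extra remarks on Parseval and the quantifier handling are consistent elaborations of the paper's argument rather than a different method.
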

\begin{proof}
Necessity: If $G_{n}(\mu_{i+1},t_{i+1})=0$ for all $n\in\mathbb N$, then the definition \eqref{eq:Q1m} in conjunction with continuity of $g_{n}(t)$ for $t\in[\mu_{i+1},t_{i+1}]$ (because of the definition \eqref{eq:gn} and the fact that $v\in C^0([t_i,t_{i+1}];L^2(0,1))$ in Proposition \ref{Pro:1}) implies
\begin{align}
g_{n}(t)=0,~~ t\in[\mu_{i+1},t_{i+1}].\label{eq:gq10}
\end{align}
According to the definition \eqref{eq:gn}, the equation \eqref{eq:gq10} implies
\begin{align}
\int_0^{1}\sin({x\pi n})v(x,t)dx=0, ~~t\in[\mu_{i+1},t_{i+1}]
\end{align}
for all $n\in\mathbb N$.
Since the set $\{\sqrt{2}\sin(x\pi n):n=1,2,\ldots\}$ is an orthonormal basis of $L^2(0,1)$, we have $v[t]=0$ for $t\in[\mu_{i+1},t_{i+1}]$.

Sufficiency: If $v[t]=0$ on $t\in[\mu_{i+1},t_{i+1}]$, then $G_{n}(\mu_{i+1},t_{i+1})=0$ for all $n\in\mathbb N$ is obtained directly by recalling  \eqref{eq:Q1m} and \eqref{eq:gn}.
\end{proof}
\begin{lema}[The identifier properties at $t_{i+1}$]\label{lem:theta}
For the adaptive estimates defined by \eqref{eq:adaptivelaw}, the following statements hold:
\begin{itemize}
\item If $v[t]$ is not identically zero for $t\in[\mu_{i+1},t_{i+1}]$, then $\hat D(t_{i+1})=D$.
\item If $v[t]$ is  identically zero for $t\in[\mu_{i+1},t_{i+1}]$, then $\hat D(t_{i+1})=\hat D(t_{i})$.
\end{itemize}
\end{lema}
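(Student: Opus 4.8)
The plan is to exploit the exact linear relation $f_n(t) = D g_n(t)$ from \eqref{eq:f}, which holds pointwise in $t$ for the true delay $D$. First I would substitute this identity into the definitions \eqref{eq:H1m} and \eqref{eq:Q1m} to establish that, for every $n \in \mathbb{N}$,
\begin{align}
H_n(\mu_{i+1}, t_{i+1}) = \int_{\mu_{i+1}}^{t_{i+1}} g_n(t) f_n(t)\, dt = D \int_{\mu_{i+1}}^{t_{i+1}} g_n(t)^2\, dt = D\, G_n(\mu_{i+1}, t_{i+1}).\label{eq:HDG}
\end{align}
This shows that $\ell = D$ satisfies every constraint $H_n = G_n \ell$ appearing in the argmin \eqref{eq:adaptivelaw}; moreover, by Assumption \ref{as:coe1} we have $D \in [\underline D, \overline D]$, so $D$ belongs to the feasible set in both cases. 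The remainder of the proof then amounts to pinning down the feasible set precisely in each of the two scenarios and reading off the minimizer.

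For the first bullet, I would assume $v[t]$ is not identically zero on $[\mu_{i+1}, t_{i+1}]$ and invoke Lemma \ref{cl:nsQ0} to produce an index $n^\ast$ with $G_{n^\ast}(\mu_{i+1}, t_{i+1}) \neq 0$. The corresponding constraint $H_{n^\ast} = G_{n^\ast} \ell$, combined with \eqref{eq:HDG}, yields $G_{n^\ast}(\ell - D) = 0$ and hence $\ell = D$. The feasible set therefore collapses to the singleton $\{D\}$, forcing the minimizer to be $\hat D(t_{i+1}) = D$.

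For the second bullet, I would assume $v[t]$ is identically zero and apply Lemma \ref{cl:nsQ0} in the other direction to obtain $G_n = 0$ for all $n$, whence \eqref{eq:HDG} also gives $H_n = 0$ for all $n$. Every constraint then reduces to the trivial identity $0 = 0$, so the feasible set becomes the entire admissible interval $[\underline D, \overline D]$. Minimizing $|\ell - \hat D(t_i)|^2$ over this interval returns the point closest to $\hat D(t_i)$; since the box constraint in \eqref{eq:adaptivelaw} keeps every estimate inside $[\underline D, \overline D]$ (and $\hat D(t_0)$ is initialized there), we have $\hat D(t_i) \in [\underline D, \overline D]$, and the minimizer is $\hat D(t_{i+1}) = \hat D(t_i)$.

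I do not expect a genuine obstacle: the argument is algebraic once \eqref{eq:HDG} is in hand, and both cases follow mechanically from Lemma \ref{cl:nsQ0}. The only point demanding care is the precise determination of the feasible set, which rests entirely on the singleton-versus-full-interval dichotomy supplied by Lemma \ref{cl:nsQ0}, together with the confirmation that $D$ and every $\hat D(t_i)$ lie in $[\underline D, \overline D]$ --- both guaranteed by Assumption \ref{as:coe1} and by the box constraint embedded in \eqref{eq:adaptivelaw}.
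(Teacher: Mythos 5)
Your proposal is correct and follows essentially the same route as the paper: the paper likewise establishes $H_n(\mu_{i+1},t_{i+1})=D\,G_n(\mu_{i+1},t_{i+1})$ (its equation \eqref{eq:Fer}, obtained via the minimizer of $h_{i,n}$ rather than by direct substitution of $f_n=Dg_n$, but the content is identical), defines the feasible set $S_i$, and uses Lemma \ref{cl:nsQ0} to show $S_i$ is the singleton $\{D\}$ in the first case and the full interval $[\underline D,\overline D]$ in the second. Your explicit remark that $\hat D(t_i)\in[\underline D,\overline D]$ is needed to conclude the argmin equals $\hat D(t_i)$ in the second case is a small point the paper leaves implicit, but otherwise the two arguments coincide.
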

\begin{proof}
Define a set
\begin{align}
S_i=\bigg\{\underline D\le \ell\le \overline D:~ &H_{n}(\mu_{i+1},t_{i+1})=G_n(\mu_{i+1},t_{i+1})\ell,\notag\\& n=1,2,\cdots\bigg\}.\label{eq:Si}
\end{align}
From \eqref{eq:Fer}, we know that $D\in S_i$. If $S_i$ is a singleton, it is nothing else but the generated adaptive estimate $\hat D(t_{i+1})$ by \eqref{eq:adaptivelaw}, which is equal to the true delay $D$.

\begin{enumerate}
\item If $v[t]$ is not identically zero for $t\in[\mu_{i+1},t_{i+1}]$, recalling Lemma \ref{cl:nsQ0}, there exists $n\in{\mathbb N}$ such that $G_{n}(\mu_{i+1},t_{i+1})\neq0$. Now defining the index set $I$ as the set of all $n\in{\mathbb N}$ with $G_{n}(\mu_{i+1},t_{i+1})\neq0$, then \eqref{eq:Si} implies that $$S_i=\left\{\ell=\frac{H_{n}(\mu_{i+1},t_{i+1})}{G_{n}(\mu_{i+1},t_{i+1})}, n\in I\right\}$$
is a singleton, and therefore from  \eqref{eq:adaptivelaw} we get  $\hat D(t_{i+1})=D$.

\item  If $v[t]$ is identically zero on $t\in[\mu_{i+1},t_{i+1}]$, according to \eqref{eq:fn}, \eqref{eq:gn}, \eqref{eq:H1m}, \eqref{eq:Q1m}, one obtains $$G_{n}(\mu_{i+1},t_{i+1})=H_{n}(\mu_{i+1},t_{i+1})=0,~~n\in \mathbb N,$$
and it  follows that {$S_i=\{\underline D\le \ell\le \overline D\}$}. Then, from  \eqref{eq:adaptivelaw} one arrive at  $\hat D(t_{i+1})=\hat D(t_{i})$.
\end{enumerate}
The proof is complete.
\end{proof}
\begin{lema}[The identifier properties for $t\in[t_i,\lim_{k\to \infty}(t_k))$]\label{lem:keep}
If $\hat D(t_{i})=D$ for certain $i\in  \mathbb Z^+$, then $\hat D(t)=D$ for all $t\in[t_i,\lim_{k\to \infty}(t_k))$.
\end{lema}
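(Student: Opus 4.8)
The plan is to argue by induction over the update indices $k\ge i$, showing that $\hat D(t_k)=D$ holds for every such $k$, and then to extend this from the discrete sequence of triggering instants to the whole interval $[t_i,\lim_{k\to\infty}(t_k))$ using the piecewise-constant nature of the estimate. The entire engine of the argument is the dichotomy already established in Lemma \ref{lem:theta}, so no fresh analytic estimate is needed.

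First I would take the hypothesis $\hat D(t_i)=D$ as the base case. For the inductive step, I would assume $\hat D(t_j)=D$ for some $j\ge i$ and invoke Lemma \ref{lem:theta}. The crucial observation is that \emph{both} branches of that lemma preserve the desired equality. If $v[t]$ is not identically zero on $[\mu_{j+1},t_{j+1}]$, then Lemma \ref{lem:theta} yields $\hat D(t_{j+1})=D$ directly. If instead $v[t]$ is identically zero on that interval, then Lemma \ref{lem:theta} gives $\hat D(t_{j+1})=\hat D(t_j)$, which equals $D$ by the inductive hypothesis. In either case $\hat D(t_{j+1})=D$, which closes the induction and establishes $\hat D(t_k)=D$ for all integers $k\ge i$.

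Finally, I would recall that the delay estimate is held constant between successive triggering times, i.e., $\hat D(t)=\hat D(t_k)$ for $t\in[t_k,t_{k+1})$, as dictated by the delay-adaptive control law \eqref{eq:dU} and the triggering mechanism \eqref{eq:ri}. Concatenating these constant pieces over all $k\ge i$ covers the entire interval $[t_i,\lim_{k\to\infty}(t_k))$ and yields $\hat D(t)=D$ throughout, which is the assertion of the lemma.

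I do not anticipate a serious obstacle: the proof is a short induction whose inductive step is supplied verbatim by Lemma \ref{lem:theta}, with the key point being that the ``identically zero'' branch merely freezes the previous value and therefore cannot destroy the property once it has been attained. The only item demanding a little care is the bookkeeping in passing from the discrete update instants $\{t_k\}$ to the continuous time variable, which rests on the fact that $\hat D$ is piecewise constant and can change only at the triggering times $t_k$.
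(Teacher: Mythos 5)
Your proof is correct and follows essentially the same route as the paper's: both arguments use the dichotomy of Lemma \ref{lem:theta} (each update yields either $D$ or the previous value) to propagate $\hat D(t_k)=D$ inductively over the triggering indices, and then use the piecewise-constant nature of the estimate to cover the whole interval. Your version merely makes the induction and the discrete-to-continuous bookkeeping more explicit than the paper's one-line ``repeating this process'' argument.
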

\begin{proof}
According to Lemma \ref{lem:theta}, we have that $\hat D(t_{i+1})$ is equal to either $D$ or $\hat D(t_i)$. Therefore, if  $\hat D(t_i)=D$, then $\hat D(t_{i+1})=D$. Repeating this process, we then have $\hat D(t)=D$ for all $t\in[t_i,\lim_{k\to \infty}(t_k))$.  The proof is complete.
\end{proof}
\begin{lema}[Existence of a minimum dwell-time]
There exists a positive constant $\tau_d$ such that $t_{i+1}-t_i\ge \tau_d$ for all $i\in \mathbb Z^+$.\label{lem:taud}
\end{lema}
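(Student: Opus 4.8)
The plan is to combine the continuity of $\Omega$ guaranteed by Proposition~\ref{Pro:1} with a \emph{uniform} bound on the rate at which $\Omega$ can grow between two consecutive updates, and then to exploit the fact that the threshold in \eqref{eq:ri} exceeds $\Omega(t_i)$ by the strictly-larger-than-one factor $(1+a)\hat\Upsilon(\hat D(t_i))$. The case $\Omega(t_i)=0$ is immediate: by \eqref{eq:ri} the next update is forced at $t_i+T$, so $t_{i+1}-t_i=T$. It therefore remains to lower-bound the inter-execution time when $\Omega(t_i)\neq0$ and the update is triggered by the norm condition rather than by the dwell $T$.

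First I would establish an exponential \emph{growth} estimate on each interval $[t_i,t_{i+1})$, on which the estimate is frozen at $\hat D(t_i)$ and the input is the certainty-equivalence law \eqref{eq:dU}. To do so I would use the functional $\bar V$ of the same form as \eqref{eq:V}, but with the backstepping kernels --- and hence the target variables $\alpha,\beta,\hat u$ --- built from $\hat D(t_i)$ rather than the true $D$; writing $\hat\xi_1,\hat\xi_2$ for the resulting delay-dependent constants, the equivalence \eqref{eq:barOmega} becomes $\hat\xi_1\xi_3\,\Omega\le\bar V\le\hat\xi_2\xi_4\,\Omega$, so that $\hat\Upsilon(\hat D(t_i))=\hat\xi_2\xi_4/(\hat\xi_1\xi_3)$ is exactly the overshoot coefficient appearing in \eqref{eq:ri} and \eqref{eq:Upsilon}. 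Differentiating $\bar V$ along \eqref{eq:ode1}--\eqref{eq:ode4}, \eqref{eq:ode6}--\eqref{eq:ode9} with \eqref{eq:dU}, the first-step variables $\alpha,\beta,X$ and the algebraic boundary relations $\alpha(0,t)=-p\beta(0,t)$, $\beta(1,t)=c_0\hat u(1,t)+q\alpha(1,t)$, $\hat u(0,t)=0$ are unaffected by the delay mismatch, since they stem from transformations that do not involve $D$ and from the control \eqref{eq:dU} that enforces $\hat u(0,t)=0$ algebraically. The only mismatch sits in the $\hat u$-transport equation, and its principal, derivative-order part recombines with the true transport term so that the coefficients of the outgoing boundary traces $\alpha(1,t)^2,\beta(0,t)^2,\hat u(1,t)^2$ are precisely those of the nominal estimate \eqref{eq:dV1} with $D$ replaced by $\hat D(t_i)$; they are therefore still rendered non-positive by \eqref{eq:delta}--\eqref{eq:rc} (using $\overline D\ge\hat D(t_i)$ in \eqref{eq:rc}). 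The residual is a finite collection of in-domain coupling integrals with coefficients proportional to $\tfrac1D-\tfrac1{\hat D(t_i)}$, which, because $D,\hat D(t_i)\in[\underline D,\overline D]$ (Assumption~\ref{as:coe1}), are absorbed into $c'\bar V(t)$ with $c'$ independent of $i$. This yields $\dot{\bar V}(t)\le b\,\bar V(t)$ for a.e.\ $t\in[t_i,t_{i+1})$ with $b>0$ uniform in $i$, whence $\bar V(t)\le \bar V(t_i)e^{b(t-t_i)}$.

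The second step converts this growth bound into a dwell-time. When $t_{i+1}$ is triggered by the norm condition, continuity of $\Omega$ gives $\Omega(t_{i+1})=(1+a)\hat\Upsilon(\hat D(t_i))\Omega(t_i)$, and the equivalence constants cancel against $\hat\Upsilon$:
\begin{align}
\bar V(t_{i+1})\ge \hat\xi_1\xi_3\,\Omega(t_{i+1})=(1+a)\hat\xi_2\xi_4\,\Omega(t_i)\ge (1+a)\bar V(t_i).\notag
\end{align}
Combining with $\bar V(t_{i+1})\le \bar V(t_i)e^{b(t_{i+1}-t_i)}$ gives $e^{b(t_{i+1}-t_i)}\ge 1+a$, i.e.\ $t_{i+1}-t_i\ge \tfrac1b\ln(1+a)>0$. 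Collecting the two regimes, the claim then holds with $\tau_d=\min\{T,\tfrac1b\ln(1+a)\}$, which is independent of $i$.

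The part I expect to require the most care is the uniform growth estimate $\dot{\bar V}\le b\bar V$: one must check that every term created by freezing the delay at $\hat D(t_i)$ --- in particular any contribution landing on the boundaries of the $\hat u$-equation --- either recombines into the $\hat D(t_i)$-version of the nominal boundary-trace coefficients or is genuinely a bounded in-domain coupling, so that no \emph{uncontrolled} outgoing trace survives, and that the bound is uniform over $\hat D(t_i)\in[\underline D,\overline D]$. The conceptual key that makes the argument close cleanly is the deliberate placement of $\hat\Upsilon(\hat D(t_i))$ in the threshold \eqref{eq:ri}: it is exactly the ratio of the upper to the lower norm-equivalence constant of $\bar V$, so it absorbs these constants and leaves the scale-invariant factor $(1+a)$, which alone fixes the positive lower bound $\tfrac1b\ln(1+a)$.
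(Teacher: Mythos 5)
Your overall strategy is genuinely different from the paper's and, at the skeleton level, is the standard event-triggered one: a uniform per-interval growth bound $\dot{\bar V}\le b\bar V$ on a Lyapunov functional built from the frozen estimate, combined with the observation that the threshold factor $\hat\Upsilon(\hat D(t_i))$ is exactly the norm-equivalence ratio of that functional, so that a norm-triggered event forces $\bar V(t_{i+1})\ge (1+a)\bar V(t_i)$ and hence $t_{i+1}-t_i\ge \tfrac1b\ln(1+a)$. The paper instead splits on whether finite-time identification occurs: by Lemmas \ref{lem:theta} and \ref{lem:keep} the estimate only ever takes the two values $\hat D(0)$ and $D$, so before identification the closed loop is a fixed system and the dwell time is bounded below via $\big((1+a)\hat\Upsilon(\hat D(0))-1\big)\Omega(t_i)/\max|\dot\Omega|$, while after identification Theorem \ref{theorem:1} gives $\Omega(t)\le\Upsilon\Omega(t_i)<(1+a)\hat\Upsilon(D)\Omega(t_i)$, so the norm condition never fires and every dwell equals $T$. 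You do not exploit this two-value structure, which forces you to seek a growth bound uniform over all of $[\underline D,\overline D]$ --- harder than what is actually needed.

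The genuine gap is at the step you yourself flag as delicate: the bound $\dot{\bar V}\le b\bar V$ for the functional built on the $\hat D(t_i)$-kernels. Your claim that the mismatch residual is ``a finite collection of in-domain coupling integrals'' is not correct as stated. If you organize the actuator equation around the estimated speed $1/\hat D(t_i)$, the residual produced by the second transformation is $\big(1/\hat D(t_i)-1/D\big)v_x(x,t)$, and $v_x$ is not controlled by $\bar V$: for $L^2$ data, $v$ carries discontinuities along characteristics and $v_x$ is not even a function. If you instead organize around the true speed $1/D$, the second-step residuals are indeed bounded in-domain couplings (proportional to $K_{1x},K_{2x},\eta'$), but then the third-step kernel $R$, designed to satisfy $\hat d\,R(x,1)=-q_2c_0K_2(x,1)$ with $\hat d=1/\hat D(t_i)$, no longer cancels the nonlocal term $q_2K_2(x,1)c_0u(1,t)$; a distributed source proportional to $\big(1/\hat D(t_i)-1/D\big)R(x,1)\hat u(1,t)$ survives, and Young's inequality then injects an extra $+\epsilon\,\hat u(1,t)^2$ that must be absorbed by the boundary-dissipation margin $\tfrac{r_c}{2D}-q_2r_ae^{\delta}c_0^2$, which condition \eqref{eq:rc} only guarantees to be nonnegative, not strictly positive. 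So an uncontrolled outgoing trace can survive exactly where you feared, and the argument does not close without strengthening \eqref{eq:rc} or reverting to the paper's device in Theorem \ref{th:part1} (keep the $D$-kernels so the entire mismatch collapses into the boundary datum $\hat u(0,t)=\xi(t)$ with $\xi(t)^2\le Q\,\Omega(t)$) --- but in that case the equivalence ratio is $\Upsilon$ rather than $\hat\Upsilon(\hat D(t_i))$ and the exact cancellation against the threshold, on which your final inequality rests, is lost.
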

\begin{proof}
The result is established by discussing the following two cases:

\begin{itemize}
\item {Case 1: The finite-time exact identification is not achieved.}  According to Lemmas \ref{lem:theta} and \ref{lem:keep}, we have $\hat D(t)=\hat D(0)$ in the control law. It implies that there is no discontinuity in the control input, and thus $\Omega(t)$ defined in \eqref{eq:omeganorm} is continuous and differentiable. Recalling the triggering mechanism \eqref{eq:ri}, the lower bound $\underline\tau_i$ of the dwell time is given by \begin{align}
\underline\tau_i=\begin{cases} \min\bigg\{\frac{((1+a){\hat\Upsilon({\hat D(0)}})-1)\Omega(t_i)}{\max_{t\in(t_i,t_{i+1})} |\dot \Omega(t)|},T\bigg\}>0,&\mbox{if $\Omega(t_i)\neq 0$}\\ T,&\mbox{if $\Omega(t_i)=0$}. \end{cases}\label{eq:taud0}
\end{align}
Therefore,  the lower bound of the minimal dwell time is written as
\begin{align}
\tau_{d}=\min\{\underline\tau_i:i\in \mathbb Z^+\}>0. \label{eq:taud1}
\end{align}

\item {Case 2: The finite-time exact identification is achieved.}

Before the time instant when the estimate $\hat D(t)$  reaches the actual value $D$, e.g., $t_{f}$, we know $\hat D(t)=\hat D(0)$ for $t<t_{f}$ by applying Lemma \ref{lem:theta} repeatedly. Through the same analysis in case 1, we have the lower bound of the minimal dwell time before $t_{f}$ is $\tau_{d}$ given by \eqref{eq:taud0}, \eqref{eq:taud1}.

After the time instant $t_{f}$,
the exact identification is achieved and   $t_{i+1}-t_i=T\ge \tau_d$ for all $i\ge f$. We prove this as follows. Once the exact delay identification is achieved, the delay-adaptive control input is identical to the nominal delay-compensated control input in Section \ref{sec:nominalcontrol}. When $\Omega(t_i)\neq0$, we have that $\Omega(t)\le\Upsilon \Omega(t_i)$ for $t_i\le t\le t_{i+1}$, $i\ge f$ according to Theorem \ref{theorem:1}. It follows from $\hat\Upsilon({\hat D(t_i)})=\hat\Upsilon(D)=\Upsilon$ for all $i\ge f$ that  $\Omega(t)< (1+a)\hat\Upsilon({\hat D(t_i)}) \Omega(t_i)$ for $t_i\le t\le t_{i+1}$, $i\ge f$. Thus $t_{i+1}-t_i=T$ according to \eqref{eq:ri}. When $\Omega(t_i)=0$, we straightforwardly have $t_{i+1}-t_i=T$ according to the second equation in \eqref{eq:ri} and therefore, $t_{i+1}-t_i=T$ for all $i\ge f$.
\end{itemize}
We finally conclude that the lower bound of the minimal dwell time in case 2 is $\tau _d$ as well.
\end{proof}
\begin{coro}[Well-posedness of the closed-loop system]\label{col:1}
No Zeno phenomenon occurs, i.e.,
$\lim_{i\to \infty} t_i=+\infty$,
and the closed-loop system is well-posed in the sense that for every $(z[0],w[0])^T\in L^2((0,1);\mathbb R^2)$, {$X(0)\in \mathbb R^m$}, and $\hat D(0)\in [\underline D,\overline D]$,  there exists a unique (weak) solution $((z,w,v)^T,{X})\in C^0(\mathbb R_+;L^2(0,1);\mathbb R^3)\times C^0(\mathbb R_+;\mathbb R^m)$, and $\hat D(t)\in \{\ell\in\mathbb R: \underline D\le \ell \le\overline D\}$ for $t\in[0,\infty)$, to the system consisting of \eqref{eq:ode1}--\eqref{eq:ode4}, \eqref{eq:ode6}--\eqref{eq:ode9}, \eqref{eq:dU}, and  \eqref{eq:adaptivelaw}.
\end{coro}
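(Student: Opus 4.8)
The plan is to assemble the three asserted properties from results already in hand, treating this corollary as a synthesis rather than a fresh computation. First I would dispatch the no-Zeno claim: Lemma \ref{lem:taud} furnishes a uniform minimum dwell-time $\tau_d>0$ with $t_{i+1}-t_i\ge\tau_d$ for every $i\in\mathbb Z^+$. Summing this bound over the first $i$ steps together with $t_0=0$ yields $t_i\ge i\tau_d$, so that $\lim_{i\to\infty}t_i=+\infty$, which is precisely the absence of a Zeno phenomenon. As a byproduct, the half-open intervals $\{[t_i,t_{i+1})\}_{i\in\mathbb Z^+}$ then form a partition of $\mathbb R_+$, a fact I would invoke repeatedly below.

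Next I would establish global well-posedness by a concatenation argument over these intervals. On any single interval $[t_i,t_{i+1})$ the delay estimate is frozen at the constant value $\hat D(t_i)$, so the control law \eqref{eq:dU} has fixed gains and the closed loop \eqref{eq:ode1}--\eqref{eq:ode4}, \eqref{eq:ode6}--\eqref{eq:ode9}, \eqref{eq:dU} is exactly the system covered by Proposition \ref{Pro:1}; it supplies a unique weak solution $((z,w,v)^T,X)\in C^0([t_i,t_{i+1}];L^2(0,1);\mathbb R^3)\times C^0([t_i,t_{i+1}];\mathbb R^m)$ from any admissible datum at $t_i$. I would then proceed by induction on $i$: starting from the given data at $t_0=0$, the terminal profile of the $i$-th solution at $t_{i+1}$, which lies in $L^2((0,1);\mathbb R^3)\times\mathbb R^m$, serves as the admissible initial datum for the $(i+1)$-th interval. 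Gluing the pieces produces a function on $[0,t_{i+1}]$, and continuity across each junction $t_{i+1}$ is inherited from the $C^0$-in-time regularity delivered by Proposition \ref{Pro:1}. Since $t_i\to+\infty$, this construction exhausts $\mathbb R_+$ and yields the claimed unique solution in $C^0(\mathbb R_+;L^2(0,1);\mathbb R^3)\times C^0(\mathbb R_+;\mathbb R^m)$.

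Finally, the admissibility of the estimate is immediate from the identifier structure. The initialization obeys $\hat D(0)\in[\underline D,\overline D]$ by hypothesis, and for every $i$ the update \eqref{eq:adaptivelaw} is defined as the minimizer over the constrained set $\{\underline D\le\ell\le\overline D:\ H_n=G_n\ell,\ n=1,2,\dots\}$, so its output necessarily satisfies $\hat D(t_{i+1})\in[\underline D,\overline D]$. Because $\hat D(t)$ is piecewise constant, equal to $\hat D(t_i)$ on $[t_i,t_{i+1})$, the inclusion $\hat D(t)\in[\underline D,\overline D]$ then holds for all $t\ge 0$.

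The argument involves no genuinely hard step, the heavy lifting having been carried out in Lemma \ref{lem:taud} and Proposition \ref{Pro:1}; the only point that demands care is the concatenation itself. Concretely, I must check that the terminal state of one interval is a legitimate initial condition for the next (guaranteed by the $L^2\times\mathbb R^m$ regularity of the weak solution) and that the glued trajectory stays continuous in time at the switching instants despite the jump in the control gains at each $t_i$. The latter is benign, since the controller \eqref{eq:dU} enters only through the boundary input $U_{\rm d}(t)$, which the transport dynamics integrate without disrupting the $C^0(\mathbb R_+;L^2(0,1))$ regularity of the state.
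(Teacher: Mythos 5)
Your proposal is correct and follows essentially the same route as the paper: the no-Zeno claim from the minimum dwell-time of Lemma \ref{lem:taud}, global existence by inductively concatenating the interval-wise solutions furnished by Proposition \ref{Pro:1}, and the bound $\hat D(t)\in[\underline D,\overline D]$ read off directly from the constrained minimization in \eqref{eq:adaptivelaw}. The only difference is cosmetic --- you spell out the continuity check at the switching instants, which the paper leaves implicit.
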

\begin{proof}
Recalling Lemma \ref{lem:taud}, we have that
\begin{align*}
t_i\ge \tau_d i,  ~~~i\in \mathbb Z^+,
\end{align*}
where $\tau_d>0$, that is,
\begin{align}
\lim_{i\to \infty}(t_i)=+\infty,\label{eq:tiinfty}
\end{align}
which implies a  solution  defined on $\mathbb R_+$ in the subsequent analysis.

From the initial data $(z[0],w[0])^T\in L^2((0,1);\mathbb R^2)$, {$X(0)\in \mathbb R^m$} and \eqref{eq:ode9}, recalling the result in Proposition \ref{Pro:1} for $i=0$, it {follows} that $((z,w,v)^T,X)\in C^0([t_0,t_{1}];L^2(0,1);\mathbb R^3)\times C^0([t_0,t_{1}];\mathbb R^m)$, {which implies $(z[t_1],w[t_1],v[t_1])^T\in L^2((0,1);\mathbb R^3)$, {$X(t_1)\in \mathbb R^m$}.} Recalling the result in Proposition \ref{Pro:1} for $i=1$, together with the solution obtained for $[t_0,t_{1}]$, we have that $((z,w,v)^T,X)\in C^0([t_0,t_{2}];L^2(0,1);\mathbb R^3)\times C^0([t_0,t_{2}];\mathbb R^m)$. Repeating the above steps, we obtain that $((z,w,v)^T,X)\in C^0([t_{0},t_{i}];L^2(0,1);\mathbb R^3)\times C^0([t_{0},t_{i}];\mathbb R^m)$ for $i\in \mathbb N$. Applying \eqref{eq:tiinfty}, we thus have $((z,w,v)^T,X)\in C^0(\mathbb R_+;L^2(0,1);\mathbb R^3)\times C^0(\mathbb R_+;\mathbb R^m)$. It is straightforwardly obtained from \eqref{eq:adaptivelaw} that $\hat D(t)\in [\underline D,\overline D]$ if $\hat D(0)\in [\underline D,\overline D]$.

Corollary \ref{col:1} is thus obtained.
\end{proof}
\begin{lema}[Finite-time convergence of the update law]\label{lem:convergence}
If the control input $U_{\rm d}(t)$ is not identically zero on $t\in[0,\infty)$, the estimate $\hat D$ converges to the true value in finite time, i.e.,
\begin{align}
&\hat D(t)=D, ~\forall t\in[ t_{f},\infty)\label{eq:convergence}\\and ~~&\hat D(t)=\hat D(0),~\forall t\in[0,t_{f})~(for~f>0) \label{eq:convergence1}
\end{align}
where $$t_{f}=\min\{t_i:i\in\mathbb N,t_i> \min\left\{t\ge0: U_{\rm d}(t)\neq 0\right\}\}.$$  If the control input $U_{\rm d}(t)$ is identically zero on $t\in[0,\infty)$, $\hat D(t)\equiv \hat D(0)$ all the time.
\end{lema}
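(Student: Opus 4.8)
The plan is to exploit the explicit transport representation of the actuator state together with the three identifier lemmas already established. Since the applied input is $U_{\rm d}$, the actuator dynamics \eqref{eq:ode7}--\eqref{eq:ode9} admit the closed form $v(x,t)=U_{\rm d}(t-Dx)$ whenever $t-Dx\ge 0$ and $v(x,t)=0$ otherwise; this identity is the bridge between the hypothesis, which is phrased in terms of $U_{\rm d}$, and the lemmas, which are phrased in terms of $v$. The degenerate case $U_{\rm d}\equiv 0$ on $[0,\infty)$ is immediate: by this representation $v\equiv 0$ on $[0,\infty)\times[0,1]$, so Lemma \ref{cl:nsQ0} forces $G_{n}(\mu_{i+1},t_{i+1})=0$ for every $n$ and every $i$, and the second bullet of Lemma \ref{lem:theta} gives $\hat D(t_{i+1})=\hat D(t_{i})$ for all $i$; as $\hat D$ is piecewise constant and is updated only at the $t_i$, we conclude $\hat D(t)\equiv\hat D(0)$.

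For the main case $U_{\rm d}\not\equiv 0$, I would set $t^{*}:=\min\{t\ge 0:U_{\rm d}(t)\neq 0\}$, the minimum being attained because $U_{\rm d}$ is, on each dwell interval $[t_i,t_{i+1})$, a continuous function of $t$ (it is a fixed-kernel functional of the states, which are continuous in time by Proposition \ref{Pro:1}). To prove \eqref{eq:convergence1} I would show that no update can move $\hat D$ strictly before $t_f$. For any index with $t_{i+1}\le t^{*}$, the window $[\mu_{i+1},t_{i+1}]$ lies in $[0,t^{*}]$, because $\mu_{i+1}$ in \eqref{eq:mui} is itself a past triggering time, whence $\mu_{i+1}\le t_{i+1}$; on that window $t-Dx\le t\le t^{*}$, so $U_{\rm d}(t-Dx)=0$ except on a set of measure zero (at most the corner $(0,t^{*})$). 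Hence $v[t]$ is identically zero in the a.e.\ sense of Section \ref{sec:sta} on $[\mu_{i+1},t_{i+1}]$, and the second bullet of Lemma \ref{lem:theta} keeps $\hat D(t_{i+1})=\hat D(t_{i})$. Iterating from $\hat D(t_0)=\hat D(0)$ gives $\hat D(t_i)=\hat D(0)$ for every $t_i\le t^{*}$; since $t_f$ is by definition the first triggering time exceeding $t^{*}$ and $\hat D$ changes only at triggering times, this yields $\hat D(t)=\hat D(0)$ on $[0,t_f)$.

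To obtain \eqref{eq:convergence} I would examine the update at $t_f$ on its window $[\mu_f,t_f]$. One first checks $\mu_f\le t^{*}<t_f$: the left inequality holds because $\mu_f$ is a past triggering time, hence $\mu_f\le t_{f-1}\le t^{*}$, while the right inequality is the defining property of $t_f$. Consequently $t^{*}\in[\mu_f,t_f)$, and by continuity of $U_{\rm d}$ on the dwell interval carrying $t^{*}$ (it is nonzero at, or immediately to the right of, $t^{*}$) the control is nonzero on a subinterval of positive length contained in $[t^{*},t_f)$. Transporting this subinterval along the characteristics $t\mapsto t-Dx$ produces a region of positive two-dimensional measure inside $[0,1]\times[\mu_f,t_f]$ on which $v\neq 0$, so $v[t]$ is \emph{not} identically zero there. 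Lemma \ref{cl:nsQ0} then guarantees $G_{n}(\mu_f,t_f)\neq 0$ for some $n$, the first bullet of Lemma \ref{lem:theta} gives $\hat D(t_f)=D$, and Lemma \ref{lem:keep} propagates this equality forward to $\hat D(t)=D$ for all $t\ge t_f$.

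I expect the last paragraph to be the main obstacle. The delicate point is the measure-theoretic claim that the nonzero-ness of $U_{\rm d}$ survives transport as a set of \emph{positive} area inside the window $[\mu_f,t_f]\times[0,1]$, rather than collapsing onto a measure-zero curve; this is precisely the borderline the a.e.\ convention of Section \ref{sec:sta} is designed to handle. Making it rigorous requires the regularity of $U_{\rm d}$ on each dwell interval (continuity from Proposition \ref{Pro:1} and the control formula \eqref{eq:dU}), so that nonvanishing at one instant forces nonvanishing on an interval, together with the bookkeeping $\mu_f\le t^{*}<t_f$ that keeps the transported strip inside the averaging window.
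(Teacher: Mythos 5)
Your proposal is correct and follows essentially the same route as the paper's proof: both arguments use the transport representation $v(x,t)=U_{\rm d}(t-Dx)$ to translate the hypothesis on $U_{\rm d}$ into the (non)vanishing of $v[t]$ on the identification window $[\mu_f,t_f]$, and then invoke Lemmas \ref{cl:nsQ0}, \ref{lem:theta}, and \ref{lem:keep} exactly as you do. Your version merely spells out the bookkeeping $\mu_f\le t_{f-1}\le t^{*}<t_f$ and the positive-measure transport argument that the paper states more tersely.
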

\begin{proof}
If the control input $U_{\rm d}(t)$ is not identically zero on $t\in[0,\infty)$, there exists a time instant $t_f$ $(f>0)$ such that $U_{\rm d}(t)$ is not identically zero in  $t\in[\mu_{f},t_{f}]$. Recalling \eqref{eq:ode7}, \eqref{eq:ode8} where $U(t)$ has been replaced by $U_{\rm d}(t)$, we conclude that the actuator  state $v[t]$ is not identically zero on $t\in[\mu_{f},t_{f}]$. Recalling Lemma  \ref{lem:theta}, Lemma \ref{lem:keep}, and Corollary \ref{col:1}, we thus obtain $\hat D(t)=D, ~\forall t\in[ t_{f},\infty)$ \eqref{eq:convergence}.

For $t\le t_{f-1}$ $(f>1)$, $U_{\rm d}(t)=0$, i.e., $v[t]\equiv 0$, it follows from Lemma  \ref{lem:theta} that $\hat D=\hat D(0)$ on $t\in[0, t_{f})$. If $f=1$, we directly have $\hat D=\hat D(0)$ on $t\in[0, t_{f})$ because the identifier has not been updated. We thus obtain \eqref{eq:convergence1}.

If $U_{\rm d}(t)$ is identically zero on $t\in[0,\infty)$, we know that $v[t]\equiv 0$ all the time. Recalling Lemma  \ref{lem:theta} and Corollary \ref{col:1}, we have $\hat D(t)\equiv \hat D(0)$ on $t\in[0,\infty)$.
\end{proof}

By virtue of Lemma \ref{lem:convergence}, a sufficient and necessary condition of exact identification of the unknown delay is the fact that $U_{\rm d}(t;\hat D(0))$ is not identically zero on $t\in[0,\infty)$.
Next, we show that we always can find $\hat D(0)$ such that $U_{\rm d}(t)$ is not identically zero on $t\in[0,\infty)$ if the exponential stability \eqref{eq:thm1} is not ensured in the open loop (if \eqref{eq:thm1} is already achieved in the open loop, the control design would be trivial).
Define a set $\mathcal D$ of all $\hat D(0)\in[\underline D,\overline D]$ such that $U_{\rm d}(t;\hat D(0))$ is not identically zero on $t\in[0,\infty)$. We show that $\mathcal D$ is a nonempty set in the following lemma.

\begin{lema}[Nonempty set $\mathcal D$]\label{lem:D}
If the considered plant \eqref{eq:ode1}--\eqref{eq:ode5} in the open loop, i.e., with the identically zero control input $U$,  is not an exponentially stable system, the set $\mathcal D$ of all $\hat D(0)\in[\underline D,\overline D]$ such that $U_{\rm d}(t;\hat D(0))$ is not identically zero on $t\in[0,\infty)$ is non-empty.
\end{lema}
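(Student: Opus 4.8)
The plan is to argue by contradiction, exhibiting a single admissible initialization of the estimator that already forces a nonzero control. The natural candidate is $\hat D(0)=D$, which is admissible since $D\in[\underline D,\overline D]$ by Assumption \ref{as:coe1}. For this choice, Lemma \ref{lem:keep} together with the absence of a Zeno phenomenon (Corollary \ref{col:1}) gives $\hat D(t)=D$ for all $t\ge 0$, so that every gain in \eqref{eq:dU} is evaluated at the true delay and the delay-adaptive law $U_{\rm d}(t;D)$ coincides with the nominal control \eqref{eq:U} for all $t\ge 0$.

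Next I would establish the key reduction: if $U_{\rm d}(t;D)\equiv 0$ on $[0,\infty)$, then the closed-loop dynamics of $(z,w,X)$ are indistinguishable from the open-loop ones. Indeed, setting $U_{\rm d}\equiv 0$ in the boundary condition \eqref{eq:ode8} and using the transport equation \eqref{eq:ode7} with the zero initial profile \eqref{eq:ode9} yields $v(x,t)\equiv 0$ on $[0,1]\times[0,\infty)$. Substituting $v\equiv 0$ into \eqref{eq:ode6} reduces the actuated boundary condition to $w(1,t)=qz(1,t)$, which is precisely \eqref{eq:ode5} with $U\equiv 0$. Hence \eqref{eq:ode1}--\eqref{eq:ode4} supplemented by $w(1,t)=qz(1,t)$ with the prescribed initial data generate exactly the open-loop trajectory, and moreover $\Omega(t)=\|z[t]\|^2+\|w[t]\|^2+|X(t)|^2$ because $\|v[t]\|=0$.

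The contradiction then follows from Theorem \ref{theorem:1}. Since $U_{\rm d}(t;D)$ is the nominal controller, the closed loop obeys the uniform exponential estimate \eqref{eq:thm1} with constants $\Upsilon,\lambda_1$ that do not depend on the initial data. By the reduction above, the corresponding open-loop trajectory inherits this same bound, i.e., the open-loop system would be exponentially stable, contradicting the hypothesis. Concretely, non-exponential stability of the open loop supplies an initial condition whose open-loop trajectory violates \eqref{eq:thm1}; for that initial condition the nominal control therefore cannot be identically zero, so $U_{\rm d}(t;D)\not\equiv 0$ and $D\in\mathcal D$, proving $\mathcal D\neq\emptyset$.

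I expect the main obstacle to be the logical bookkeeping connecting a trajectory-level statement (the control vanishing along one solution) to the system-level property of exponential stability. The crux is that Theorem \ref{theorem:1} delivers constants $\Upsilon,\lambda_1$ independent of the initial state, so that the coincidence of controlled and uncontrolled trajectories whenever $U_{\rm d}\equiv 0$ transfers the uniform decay from the controlled system to the open-loop system; making this transfer airtight, by selecting the initial datum that witnesses open-loop instability, is the only delicate point, while the reduction $U_{\rm d}\equiv 0\Rightarrow v\equiv 0$ and the identification of the boundary conditions are routine.
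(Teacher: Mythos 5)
Your proposal is correct and follows essentially the same route as the paper: assume $\mathcal D=\emptyset$, specialize to the admissible choice $\hat D(0)=D$ so that $U_{\rm d}(t;D)$ coincides with the nominal controller of Theorem~\ref{theorem:1}, and conclude that a vanishing nominal control would force the open loop to inherit the exponential estimate \eqref{eq:thm1}, contradicting the hypothesis. Your write-up is in fact slightly more careful than the paper's in spelling out the reduction $U_{\rm d}\equiv 0\Rightarrow v\equiv 0$ and in selecting the initial datum that witnesses the open-loop instability, but the underlying argument is identical.
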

\begin{proof}
Suppose that $\mathcal D$ is {an empty set}, equivalently,  $U_{\rm d}(t;\hat D(0))$ is identically zero on $t\in[0,\infty)$ for all $\hat D(0)\in[\underline D,\overline D]$. It implies that $U_{\rm d}(t;D)$, i.e., the nominal control input $U(t)$ in Theorem \ref{Pro:1} that shows the exponential stability of the nominal closed-loop system, is identically zero on $t\in[0,\infty)$. {Thus,} the plant with the identically zero control input (the open-loop system) has already been exponentially stable, which contradicts with that the considered plant in open loop is not an exponentially stable system. Therefore, Lemma \ref{lem:D} is obtained.
\end{proof}
\begin{remark}[{On the choice of   $\hat D(0)$ in practice}]\label{rm:D0}
{ {When $\Omega(0)\neq 0$, a simple way to set the initial condition $\hat D(0)$ is to find a $\hat D(0)$ such that $U_{\rm d}(0;\hat D(0))$ defined in \eqref{eq:dU} is nonzero, which is available in most practical applications.} As a result, the exact parameter identification would be achieved at the first triggering time. If $\Omega(0)= 0$ that means that the initial values of all plant states are zero, {and consequently,  the states of the plant \eqref{eq:ode1}--\eqref{eq:ode5} are identically zero all the time under the zero control input. Therefore, the control input vanishes with no need to construct an estimate of the delay, namely, $\hat D$.}}
\end{remark}

Now, we are in a position to state our main result in the following theorem, i.e., exponential regulation of the plant and actuator states.
\begin{remark}\label{rem1}
If the exponential stability has already been achieved in open loop, the control design would be trivial, because the control input can just be set as zero even though the unknown delay exists. We only discuss the case that the open-loop system is not exponentially stable in the following theorem.
\end{remark}
\begin{thme}\label{th:part1}
For all initial data $(z[0],w[0])^T\in L^2(0,1)$, $X(0)\in \mathbb R^m$, $\hat D(0)\in\mathcal D$,  considering the closed-loop system consisting of the plant \eqref{eq:ode1}--\eqref{eq:ode4}, \eqref{eq:ode6}--\eqref{eq:ode9}, the controller \eqref{eq:dU}, the triggering mechanism \eqref{eq:ri}, and the least-squares identifier  \eqref{eq:adaptivelaw}, the exponential regulation of the closed-loop system is obtained in the sense that there exist positive constants $M,\lambda_1$ such that
\begin{align}
\Omega(t)\le M\Omega(0)e^{-\lambda_1 t},~~t\ge 0,\label{eq:th1norm1}
\end{align}
where $\Omega(t)$ is defined in \eqref{eq:omeganorm}.
\end{thme}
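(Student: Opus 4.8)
The plan is to split the time axis at the finite-time identification instant $t_f$ and to treat the pre-identification window $[0,t_f)$ and the post-identification window $[t_f,\infty)$ separately, stitching the two bounds into a single exponential estimate. By Remark~\ref{rem1} I may assume the open-loop system is not exponentially stable, so by Lemma~\ref{lem:D} the set $\mathcal D$ is nonempty and the hypothesis $\hat D(0)\in\mathcal D$ is meaningful; in particular $U_{\rm d}$ is not identically zero (and since $\hat D(0)\in\mathcal D$ forces $\Omega(0)\neq 0$, because zero initial data with $U_{\rm d}(0)=0$ would keep all states and the input identically zero). Lemma~\ref{lem:convergence} then guarantees that $t_f$ is finite, that $\hat D(t)=\hat D(0)$ on $[0,t_f)$, and that $\hat D(t)=D$ on $[t_f,\infty)$, while Corollary~\ref{col:1} supplies well-posedness, continuity of $\Omega$, and the absence of Zeno behavior, so every object below is defined on $\mathbb R_+$.

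First I would bound the transient on $[0,t_f)$. Since the dwell time is bounded below by $\tau_d>0$ (Lemma~\ref{lem:taud}) and $t_f$ is finite, only finitely many triggering intervals $[t_i,t_{i+1}]$, $i=0,\dots,f-1$, precede $t_f$, with $f\le t_f/\tau_d$. On each such interval $\hat D(t_i)=\hat D(0)$, so the triggering rule \eqref{eq:ri} gives $\Omega(t)\le (1+a)\hat\Upsilon(\hat D(0))\Omega(t_i)$ for $t\in[t_i,t_{i+1}]$, since $\Omega$ is continuous and cannot cross the threshold before $t_{i+1}$. Iterating from $i=0$ yields $\Omega(t)\le \big[(1+a)\hat\Upsilon(\hat D(0))\big]^{f}\Omega(0)=:C_1\Omega(0)$ for all $t\in[0,t_f]$, and hence $\Omega(t)\le C_1 e^{\lambda_1 t_f}\Omega(0)e^{-\lambda_1 t}$ on this window.

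Next I would treat $[t_f,\infty)$. After $t_f$ the estimate is frozen at $\hat D(t)=D$, so the certainty-equivalence law \eqref{eq:dU} coincides exactly with the nominal delay-compensated controller \eqref{eq:U}, and the closed loop is the time-invariant nominal system with initial data taken at $t=t_f$. The Lyapunov estimate underlying Theorem~\ref{theorem:1} — the norm equivalence \eqref{eq:barOmega} together with the dissipation \eqref{eq:dV2} — never used the zero actuator initialization \eqref{eq:ode9}, so it applies verbatim with the (generally nonzero) profile $v[t_f]$, giving $\Omega(t)\le \Upsilon\,\Omega(t_f)e^{-\lambda_1(t-t_f)}$ for $t\ge t_f$. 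Combining with $\Omega(t_f)\le C_1\Omega(0)$ gives $\Omega(t)\le \Upsilon C_1 e^{\lambda_1 t_f}\Omega(0)e^{-\lambda_1 t}$ on $[t_f,\infty)$. Since $\Upsilon\ge 1$ by \eqref{eq:barOmega}, setting $M:=\Upsilon C_1 e^{\lambda_1 t_f}$ dominates both windows and yields \eqref{eq:th1norm1}.

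I expect the delicate point to be the justification that the transient constant $C_1$ is genuinely finite and trajectory-independent: this hinges on $f$ being finite, which rests on chaining Lemma~\ref{lem:convergence} (finiteness of $t_f$) with the strictly positive dwell time of Lemma~\ref{lem:taud}. A secondary subtlety worth stating explicitly is that Theorem~\ref{theorem:1} must be reread as a statement about the nominal closed loop from an arbitrary initial time with arbitrary actuator state, rather than only from $t=0$ with $v\equiv 0$; the Lyapunov argument supports this, but it is precisely the step where one must avoid silently importing the $v(x,0)=0$ hypothesis.
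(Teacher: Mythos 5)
Your proof is correct, but it reaches the pre-identification bound by a genuinely different route than the paper. The paper never invokes the triggering mechanism in this proof at all: it keeps the single Lyapunov function $V$ from Theorem~\ref{theorem:1} on all of $\mathbb R_+$, observes that feeding $\hat D$ instead of $D$ turns the boundary condition \eqref{eq:tar14} into $\hat u(0,t)=\xi(t)$ with the gain-mismatch term \eqref{eq:xi}, bounds $\xi(t)^2$ by a constant $Q(\hat D(0))$ times $V(t)$ via the norm equivalence \eqref{eq:barOmega}, and obtains $\dot V\le(Q(\hat D(0))-\lambda_1)V$ on $[0,t_f]$, hence $\Omega(t)\le\Upsilon\Omega(0)e^{\lambda_2 t}$ there and $M=\Upsilon^2e^{(\lambda_2+\lambda_1)t_f}$. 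You instead cap the transient by iterating the event-trigger's own growth bound $\Omega(t)\le(1+a)\hat\Upsilon(\hat D(0))\Omega(t_i)$ over the finitely many events preceding $t_f$, with $f$ controlled by Lemma~\ref{lem:taud}. Both are sound: the paper's argument is independent of the update schedule (it would survive a purely time-triggered identifier) and yields an explicit growth rate in terms of the controller-gain mismatch, whereas yours is more elementary --- no estimate of $M_i(\cdot;D)-M_i(\cdot;\hat D(0))$ is needed --- at the price of a constant $C_1=[(1+a)\hat\Upsilon(\hat D(0))]^{f}$ whose exponent is only controlled through the dwell time. Your treatment of $[t_f,\infty)$ coincides with the paper's (there $\xi\equiv0$ and $\dot V\le-\lambda_1V$), and your caveat about not importing $v(x,0)=0$ is exactly why the paper argues through $V$ directly rather than citing Theorem~\ref{theorem:1} as a black box. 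Two small points to tighten: (i) if $\Omega(t_i)=0$ at some intermediate event, the trigger \eqref{eq:ri} provides no threshold, but then all states vanish at $t_i$, the feedback \eqref{eq:dU} gives $U_{\rm d}\equiv0$ thereafter, and the bound holds trivially; (ii) the step $\Omega(t)\le(1+a)\hat\Upsilon(\hat D(t_i))\Omega(t_i)$ on the whole of $[t_i,t_{i+1}]$ needs $(1+a)\hat\Upsilon>1$ together with continuity of $\Omega$ from Corollary~\ref{col:1}, which you use implicitly and which does hold since $\xi_2>1>\xi_1$ and $\xi_4\ge\xi_3$.
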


\begin{proof}
{Replacing the nominal control law $U$  by the  delay-adaptive control law $U_d$ defined by \eqref{eq:dU} in \eqref{eq:ode8},  through the transformations in Section \ref{sec:nominalcontrol}, the right boundary condition of the actuator PDE \eqref{eq:tar14} in the target system \eqref{eq:targ5}--\eqref{eq:targ4}, \eqref{eq:tar12}--\eqref{eq:tar14} becomes}
\begin{align}
\hat u(0,t)=\xi(t),\label{eq:rbtar}
\end{align}
where
\begin{align}
\xi(t)=&-\int_0^1 (M_1(y;D)-M_1(y;\hat D))z(y,t)dy\notag\\
&-\int_0^1 (M_2(y;D)-M_2(y;\hat D))w(y,t)dy\notag\\&+\int_0^1 (M_3(y;D)-M_3(y;\hat D))  v(y,t)dy\notag\\&+(M_4(D)-M_4(\hat D)) {X}(t).\label{eq:xi}
\end{align}
{Taking the derivative of \eqref{eq:V} along the target system states corresponding to the even-based closed-loop system consisting of \eqref{eq:targ5}--\eqref{eq:targ4}, \eqref{eq:tar12}, \eqref{eq:tar13}, and \eqref{eq:rbtar},
through a similar process in \eqref{eq:dV1}, recalling  conditions \eqref{eq:delta}--\eqref{eq:rd} on  ${\delta}$, $r_a$, $r_c$, and $r_d$  (we emphasize that conditions \eqref{eq:delta}--\eqref{eq:rd} only depend on  the known plant parameters and the known bounds of the unknown parameters in Assumption \ref{as:coe1}), we obtain}
\begin{align}
\dot V(t)\le -\lambda_1 V(t)+\frac{{r_c}}{2D}\xi(t)^2,~~t\ge 0,\label{eq:dVa0}
\end{align}
where
$\lambda_1$ is given in \eqref{eq:lam1}.  From Lemma \ref{lem:convergence}, \ref{lem:D} and Remark \ref{rem1}, the finite-time convergence of the parameter estimate $\hat D$ to the actual value $D$ is obtained directly.
Now, using  \eqref{eq:convergence} and \eqref{eq:xi}, one can establish that
\begin{align}
\xi(t)\equiv 0,~~t\in[t_{f},\infty).\label{eq:y0}
\end{align}
We then have that
\begin{align}
\dot V(t)\le -\lambda_1 V(t),~~t\ge t_{f}.\label{eq:dVa1}
\end{align}
{Multiplying   both sides of \eqref{eq:dVa1} by $e^{\lambda_1t}$ and integrating the resulting terms from $t_{f}$ to $t$ lead to the following inequality}
$$V(t)\le V(t_{f})e^{-\lambda_1 (t-t_{f})},~~t\ge t_{f},$$ {which, by virtue of  \eqref{eq:barOmega}, is equivalent to}
 \begin{align}
\Omega(t)\le \Upsilon\Omega(t_{f})e^{-\lambda_1 (t-t_{f})},~~ t\ge t_{f},\label{eq:Vnorm1}
\end{align}
{where $\Omega$ is defined in   \eqref{eq:omeganorm} and the positive constant $\Upsilon$ is given in \eqref{eq:Upsilon} }

{Note that the norm estimate \eqref{eq:Vnorm1}  is only true for $t\ge t_{f}$  and if $t_{f}=0$, \eqref{eq:th1norm1} is obtained directly. }

{Next, we extend our analysis for $t\in[0,t_{f}]$ with $t_{f}\neq 0$. With the help of  \eqref{eq:barOmega}, \eqref{eq:xi}, we obtain from \eqref{eq:dVa0} that}
\begin{align}
\dot V(t)\le& - \lambda_1 V(t)+ Q(\hat D(0))V(t),~~~t\in[0,t_{f}],\label{eq:dvcal2b}
 \end{align}
where the positive constant $Q(\hat D(0))$  is
\begin{align}
Q(\hat D(0))=&\max\bigg\{\max_{i=1,2,3;y\in[0,1]}\left\{(M_i(y;D)-M_i(y;\hat D(0)))^2\right\},\notag\\& (M_4(D)-M_4(\hat D(0)))^2\bigg\}\frac{2{r_c}}{D\xi_1\xi_3}
\end{align}
which is derived  by finding an {upper bound} for  $\xi(t)^2$ given in \eqref{eq:xi} and recalling \eqref{eq:barOmega}.

Hence, the following holds
    \begin{align}
\Omega(t)\le \Upsilon\Omega(0) e^{\lambda_2(\hat D(0))t},~~t\in[0,t_{f}],\label{eq:dvcal2c}
 \end{align}
where $$\lambda_2(\hat D(0))=|Q(\hat D(0))-\lambda_1|>0,$$  and the positive constant $\Upsilon$ is given in \eqref{eq:Upsilon}.
Therefore, it straightforwardly follows that
\begin{align}
\Omega(t_{f})\le \Upsilon e^{\lambda_2(\hat D(0))t_{f}}\Omega(0).\label{eq:normbound}
\end{align}
Considering \eqref{eq:dvcal2c}, combining \eqref{eq:Vnorm1} and   \eqref{eq:normbound}  yields
 \begin{align}
\Omega(t)\le \Upsilon^2 e^{(\lambda_2(\hat D(0))+\lambda_1)t_{f}}\Omega(0)e^{-\lambda_1 t},~~~~t\ge 0,\label{eq:Vnorm2}
\end{align}
which is equivalent to  \eqref{eq:th1norm1} with
$$M=\Upsilon^2 e^{(\lambda_2(\hat D(0))+\lambda_1)t_{f}}.$$ The proof of the theorem is complete.
\end{proof}
\section{Simulation}\label{sec:sim}
A deep-sea construction vessel (DCV) is used to place equipment to be installed at the predetermined location on the seafloor, which is shown in Figure \ref{fig:DCV}.
Different from \cite{J2022Delay-compensated} that deals with a known sensor delay that exists in the large-distance transmission of the sensing signal from the seafloor to the
vessel on the ocean surface through a set of acoustics devices, we consider all possible delays here (including the transmission of the sensing signal, computation of the control law, and the delay in the hydraulic actuator for the ship-mounted crane and so on) as an unknown delay in the control input channel.  By designing a control input at the top of the crane cable, our goal is to reduce the oscillations of the crane cable with the purpose of placing the payload attached at the bottom of the cable in the target area, despite the presence of the unknown delay.
\begin{figure}
\centering
\includegraphics[width=6cm]{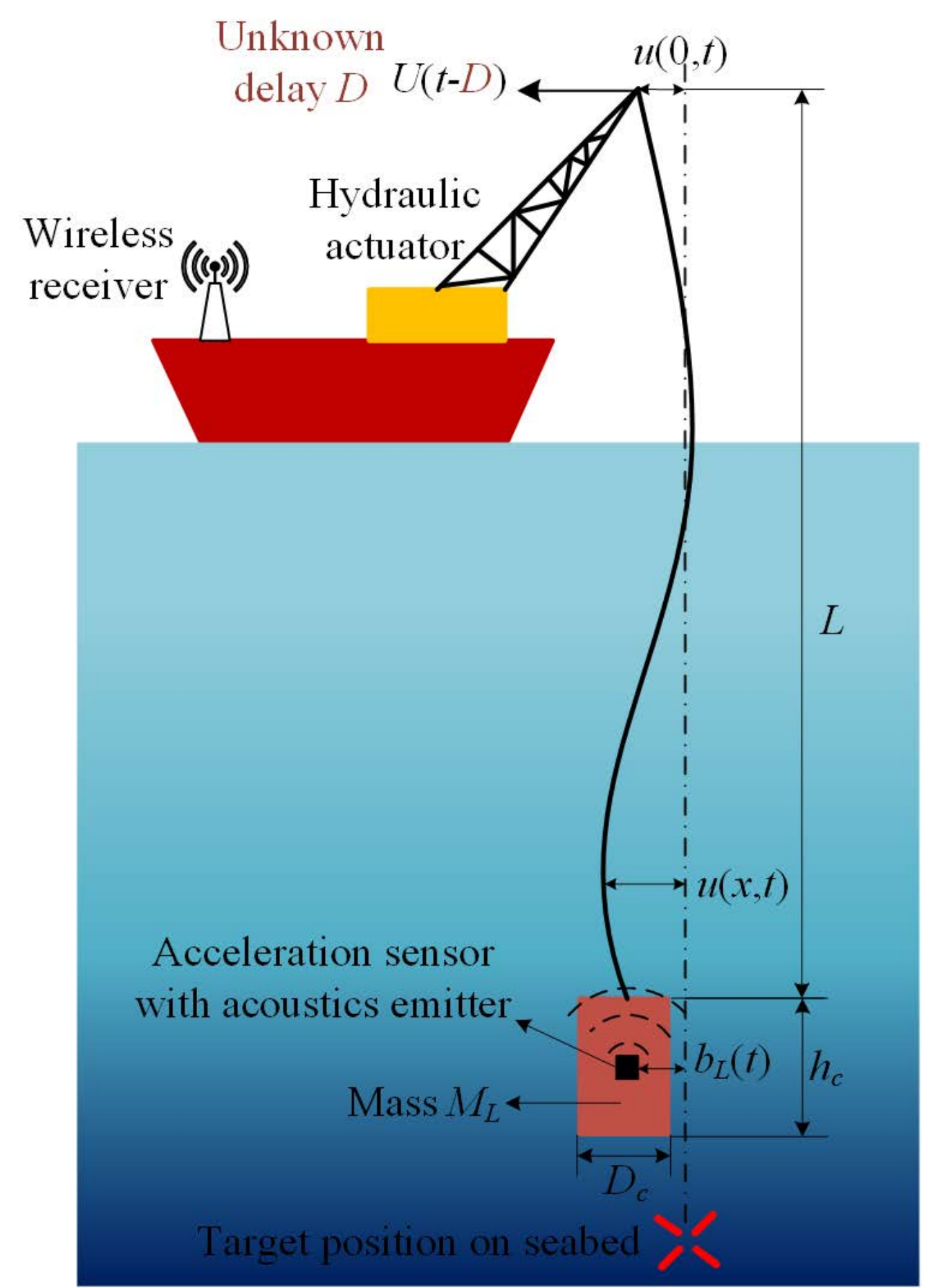}
\caption{Deep-sea construction vessel.}
\label{fig:DCV}
\end{figure}
\subsection{Model}\label{sec:simmodel}
The following dynamic model of cable-payload lateral oscillations in DCV is taken from \cite{J2022Delay-compensated},
\begin{align}
T_0u_{\bar x}(0,t) &= U(t-D),\label{eq:5.2}\\
\rho u_{tt}(\bar x,t) &= T_0u_{\bar x\bar x}(\bar x,t) - d_c{u_t}(\bar x,t),\label{eq:5.3}\\
u(L,t) &= b_L(t),\label{eq:5.4}\\
M_L\ddot b_L(t) &= -d_L\dot b_L(t) + T_0u_{\bar x}(L,t),\label{eq:5.5}
\end{align}
$\forall (\bar x,t) \in [0,L]\times[0,\infty)$.  {The state $u(\bar x,t)$ describes the lateral oscillation displacement  along the cable, and $b_L(t)$ denotes that of the payload. The control input $U$ is subject to the unknown time delay $D$ mentioned above.
The static tension $T_0$  is defined as
$$T_0=M_Lg-F_{\rm buoyant},$$
where the buoyancy $F_{\rm buoyant}$ is
$$F_{\rm buoyant}=\frac{1}{4}\pi D_c^2 h_c\rho_{s}g.$$
The physical parameters of the deep-sea construction vessel are shown in Table \ref{table1}.

Like \cite{J2022Delay-compensated}, after applying the Riemann transformations
\begin{align}
{z}(\bar x,t) &= {u_t}(\bar x,t) - \sqrt {\frac{T_0}{\rho}} {u_{\bar x}}(\bar x,t),\label{eq:t11}\\
{w}(\bar x,t) &= {u_t}(\bar x,t) + \sqrt {\frac{T_0}{\rho}} {u_{\bar x}}(\bar x,t),\label{eq:t12}
\end{align}introducing a  space normalization variable
\begin{align}x=\frac{\bar x}{L}\in[0,1]\label{eq:barx},\end{align}
and defining $$X(t)=\dot b_L(t),$$  equations \eqref{eq:5.2}--\eqref{eq:5.5} are rewritten  as the considered plant \eqref{eq:ode1}--\eqref{eq:ode5} with the coefficients
\begin{align}
 &c_0=2\sqrt{\frac{1}{T_0\rho}},~~{q_1}=q_2=\frac{1}{L}\sqrt {\frac{T_0}{\rho}},\label{eq:simpa1}\\&d_1=d_2=d_3=d_4=\frac{{{-d_c}}}{2\rho},q=-1,~p=1,\\
 &C=2,~{A}=\frac{-d_L}{M_L}+\frac{\sqrt{T_0\rho}}{M_L},~{B}=-\frac{\sqrt {{T_0}\rho}}{M_L},\label{eq:simpa3}
\end{align}
which is the simulation model in this section, where it can be checked that the plan parameters in \eqref{eq:simpa1}--\eqref{eq:simpa3} satisfy Assumptions \ref{as:AB}, \ref{as:pq} by recalling Table \ref{table1}.
\begin{table}[!t]
\centering
\caption{Physical parameters of the DCV.}
\begin{tabular}{lccc}
\hline
Parameters (units)&values\\ \hline
Cable length ${L}$ (m) &1500\\
Cable linear density ${\rho}$ (kg/m) &7.5\\
Payload mass ${M_L}$ (kg) &3.5$\times$ $10^5$\\
Gravitational acceleration $g$ (m/s$^{2}$) &9.8\\
Cable material damping coefficient $d_c$ (N$\cdot$s/m) & 0.8\\
Height of payload modeled as a cylinder $h_c$ (m)& 7.5\\
Diameter of payload modeled as a cylinder $D_c$ (m)& 5\\
Damping coefficient at payload $d_L$ (N$\cdot$s/m) & 1.2$\times$$10^5$\\
Seawater density $\rho_s$ (kgm$^{-3}$)& $1024$  \\\hline
\end{tabular}
\label{table1}
\end{table}

The initial conditions are defined as $$z(x,0)=8\sin( {5\pi x}(1-x)),~~w(x,0)=-8\cos( 5{\pi x}),$$ thereby, $$X(0)=1.13,$$ recalling \eqref{eq:ode4}, which physically corresponds to the initial oscillation velocities of the payload.  The unknown delay $D$ is set as 1, and the known bounds $\underline D$ and $\overline D$ are assumed as 0.01 and 2. We will show the simulation results  of the following four cases:
\begin{itemize}
\item Open loop: the control input is zero;

\item Nonadaptive control: the nominal delay-compensated control with the unknown delay $D$ replaced by its estimate $0.25$;

\item Delay-adaptive control with the initial delay estimate $\hat D(0)=0.25$, where the design parameter $K$ in \eqref{eq:Am} is chosen as  $K=-18$;

\item Delay-adaptive control with the initial delay estimate $\hat D(0)=1.5$, where the design parameter $K$ in \eqref{eq:Am} is chosen as  $K=-13$;
\end{itemize}
Other design parameters are $$\delta=-0.36,~r_a=1.02,~r_c=1,$$$$r_d=0.02,~a=2,~T=3.12,~\tilde N=10,$$ according to \eqref{eq:delta}--\eqref{eq:rd}, where the last three parameters are free but positive. The parameter $\bar n$ mentioned in Remark \ref{rm:impid} is set as $\bar n=2$.
\begin{remark}\label{rm:sim}
In addition to Remark \ref{rm:impid} and Remark \ref{rm:D0} about the implementation of the delay identifier, some more things are worth noting in the simulation. 1) Approximating the integration with respect to the space variable in the identifier as the summation operator will cause a tiny error between the final parameter estimate and the true value in the simulation result, which will be seen in Fig. \ref{fig:estimate}. The smaller space step adopted in the simulation will make the error smaller. 2) The error of approximation in the simulation will also lead to tiny differences between the outputs of the identifer at each updating time even if the effective parameter deification has been achieved. Therefore, we set a small margin to tolerate the approximation error, that is--if the difference between the estimates from the identifier at two adjacent updating times is smaller than $2\%$ of the true value, we consider that this difference is caused by the approximation error in the simulation, and thus keep the estimate value as same as the one at the former updating time.
\end{remark}
\subsection{Simulation result}
The numerical computation is conducted using the finite difference method with the step sizes of $t$ and $x$ as 0.001, and 0.02, respectively. The approximate solutions of the kernel PDEs used in the control law, which is defined by \eqref{eq:dU}, \eqref{eq:ri}, \eqref{eq:adaptivelaw} where the integral operators
are approximated by sums, are also solved by the finite difference method based on the discretization of the triangular domain into a uniformly spaced grid with the interval of $0.02$.

The designed delay-adaptive control input and the estimate of the unknown delay are shown in Figures \ref{fig:control} and \ref{fig:estimate}, respectively, from which we know that the identification of the unknown delay is achieved at the first triggering time, no matter the initial delay estimate is less than ($\hat D(0)=0.25$) or larger than ($\hat D(0)=1.5$) the true value $D=1$. As mentioned in Remark \ref{rm:sim}, the tiny differences between the delay estimate and its true values come from the error of approximation---that is, approximating the integration with respect to the space variable from 0 to 1 in the identifier as the summation operator for the 51 spatial discrete points with the fixed interval of $0.02$. The time evolution of the ODE state $X(t)$ is shown in Figure \ref{fig:X}, where the brown dashed line, the red dashed line, the black solid line,  and the blue dot-dash line show the results of the four cases mentioned in Section \ref{sec:simmodel}, respectively. Although both the nonadaptive delay-compensated controller and the delay-adaptive controllers can attenuate the state of the ODE in comparison to the open loop scenario, Figure \ref{fig:X} further reveals the ``delay mismatch" in the non-adaptive control law leads to slower convergence after the time point when the exact delay estimate is obtained and the updated input signal reaches the ODE through the transport PDEs. Even though the simulation model, like many practical models that usually include damping, is not an open-loop unstable plant,  the proposed control design still shows improved convergence rates under the proposed delay-adaptive controllers as compared to both the open-loop case and nonadaptive delay-compensated controller. Similarly, it is shown in Figures \ref{fig:z}, \ref{fig:w}, and \ref{fig:v} that the PDE plant states $z(x,t)$, $w(x,t)$ and the actuator state $v(x,t)$ all converge to zero when the system is subject to the proposed delay-adaptive control inputs with $\hat D(0)=0.25$ or $\hat D(0)=1.5$.
\begin{figure}
\centering
\includegraphics[width=9cm]{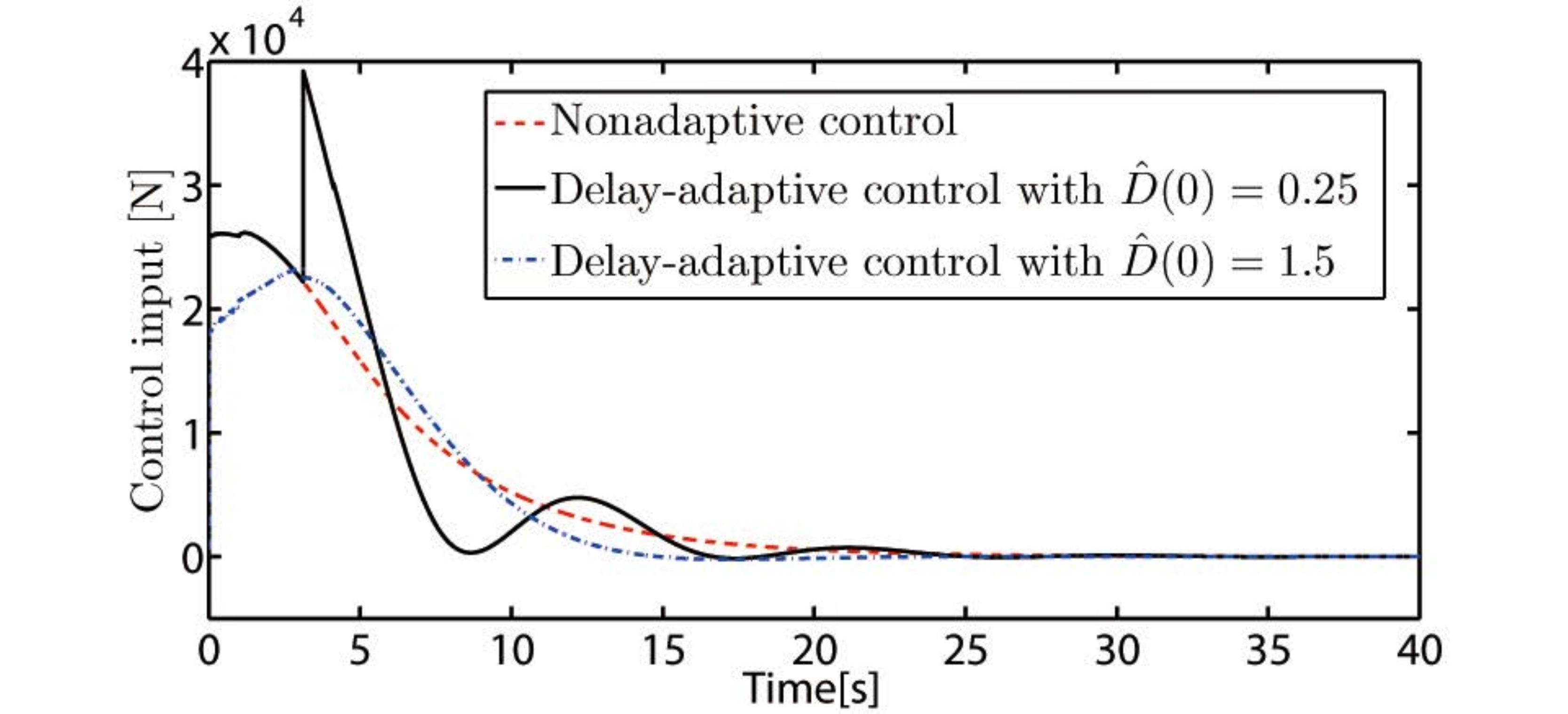}
\caption{The delay-adaptive control input $U_{\rm d}(t)$ with $\hat D(0)=0.25$ or $\hat D(0)=1.5$ and the nonadaptive control input $U_0(t)$.}
\label{fig:control}
\end{figure}
\begin{figure}
\centering
\includegraphics[width=9cm]{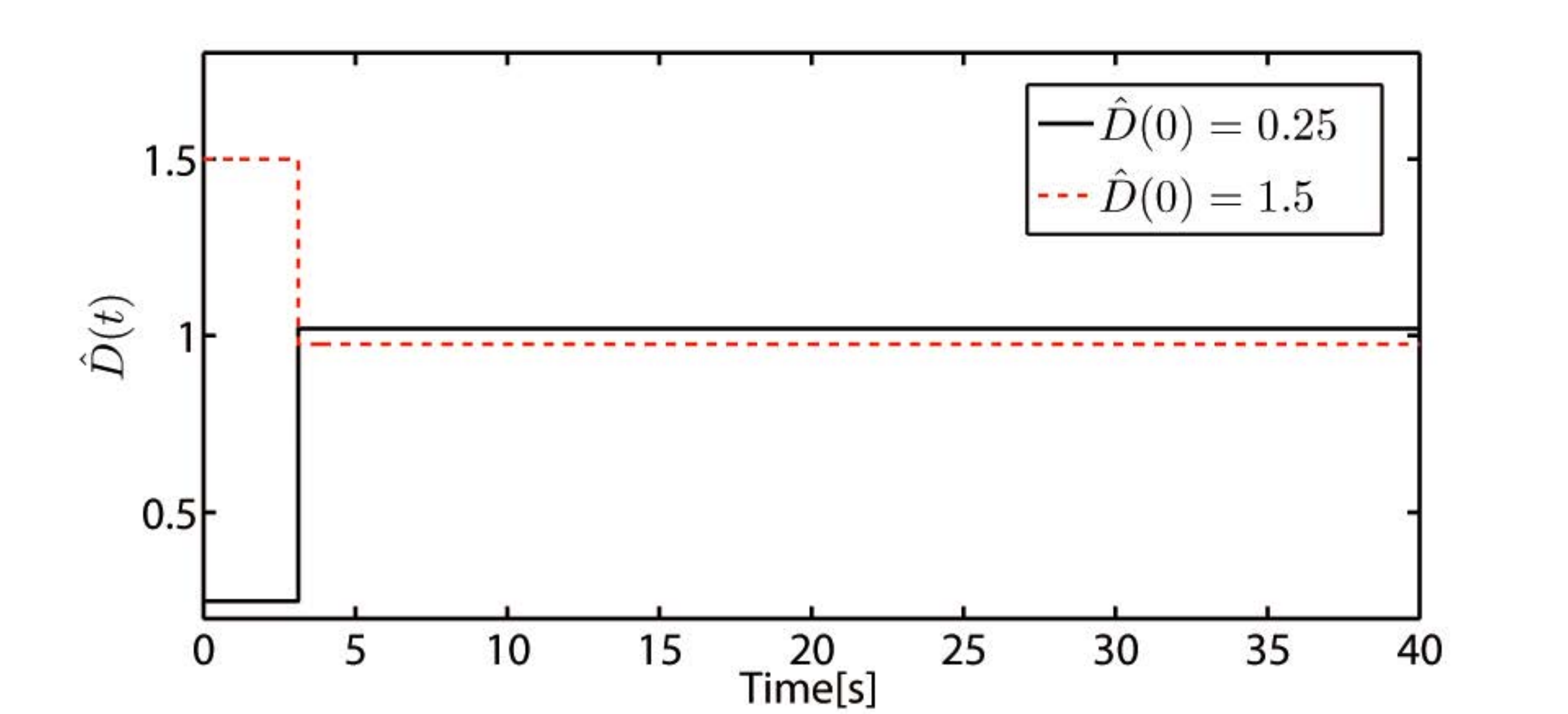}
\caption{Estimate of the unknown delay $D$ under the initial estimate $\hat D(0)=0.25$ or $\hat D(0)=1.5$.}
\label{fig:estimate}
\end{figure}
\begin{figure}
\centering
\includegraphics[width=9cm]{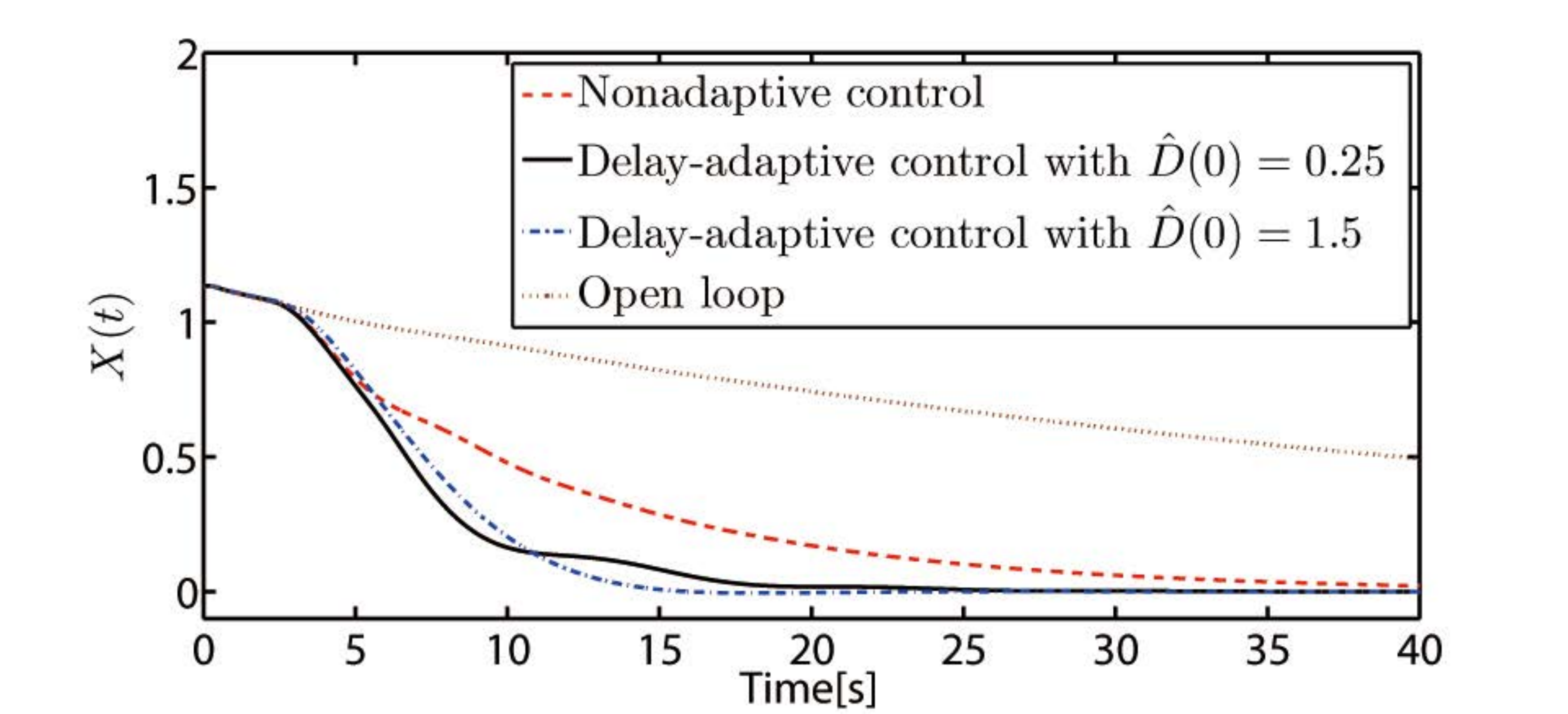}
\caption{The evolution of $X(t)$ under the delay-adaptive control $U_{\rm d}(t)$ with $\hat D(0)=0.25$ or $\hat D(0)=1.5$ and the nonadaptive control $U_0(t)$.}
\label{fig:X}
\end{figure}
\begin{figure}
\begin{minipage}{0.49\linewidth}
  \centerline{\includegraphics[width=4.8cm]{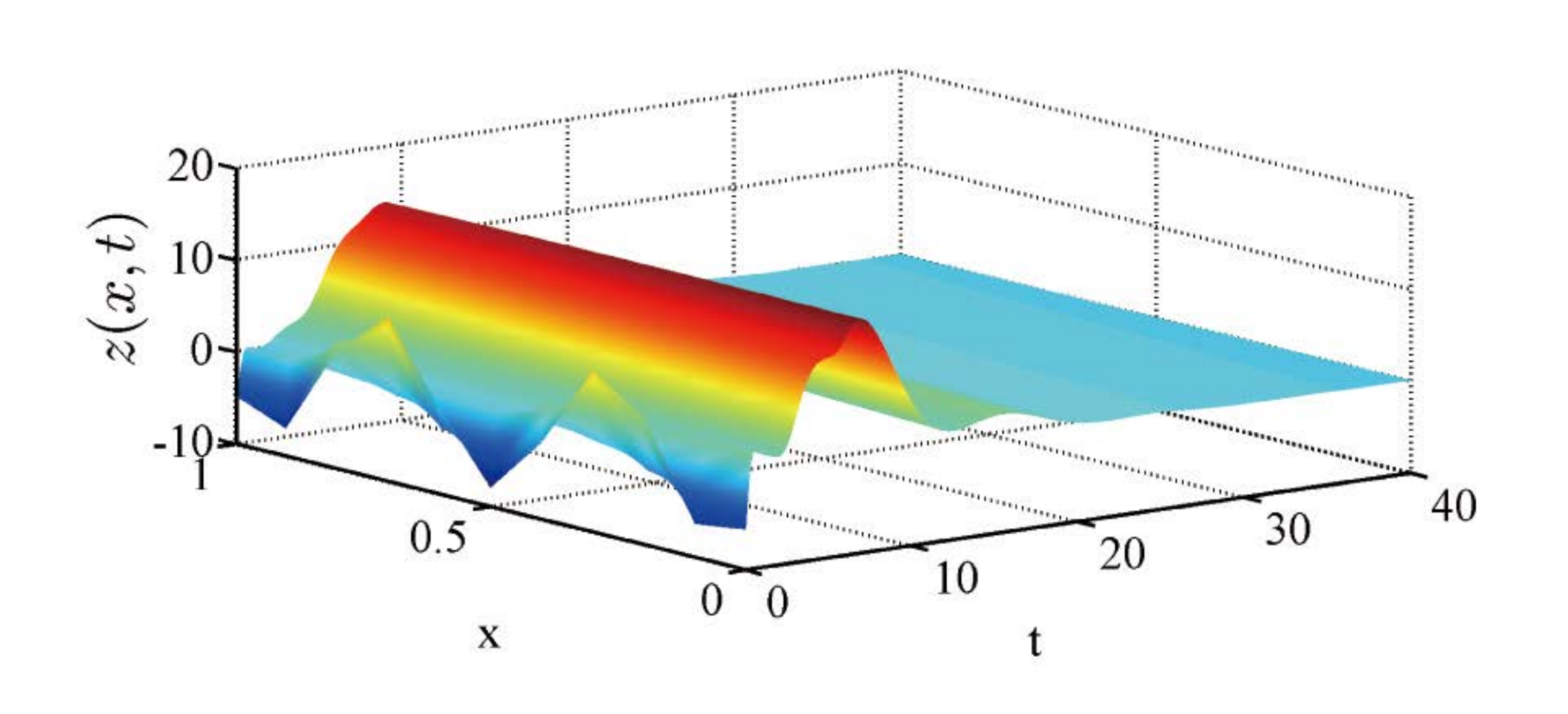}}
  \centerline{(a) $\hat D(0)=0.25$}
\end{minipage}
\hfill
\begin{minipage}{.49\linewidth}
  \centerline{\includegraphics[width=4.8cm]{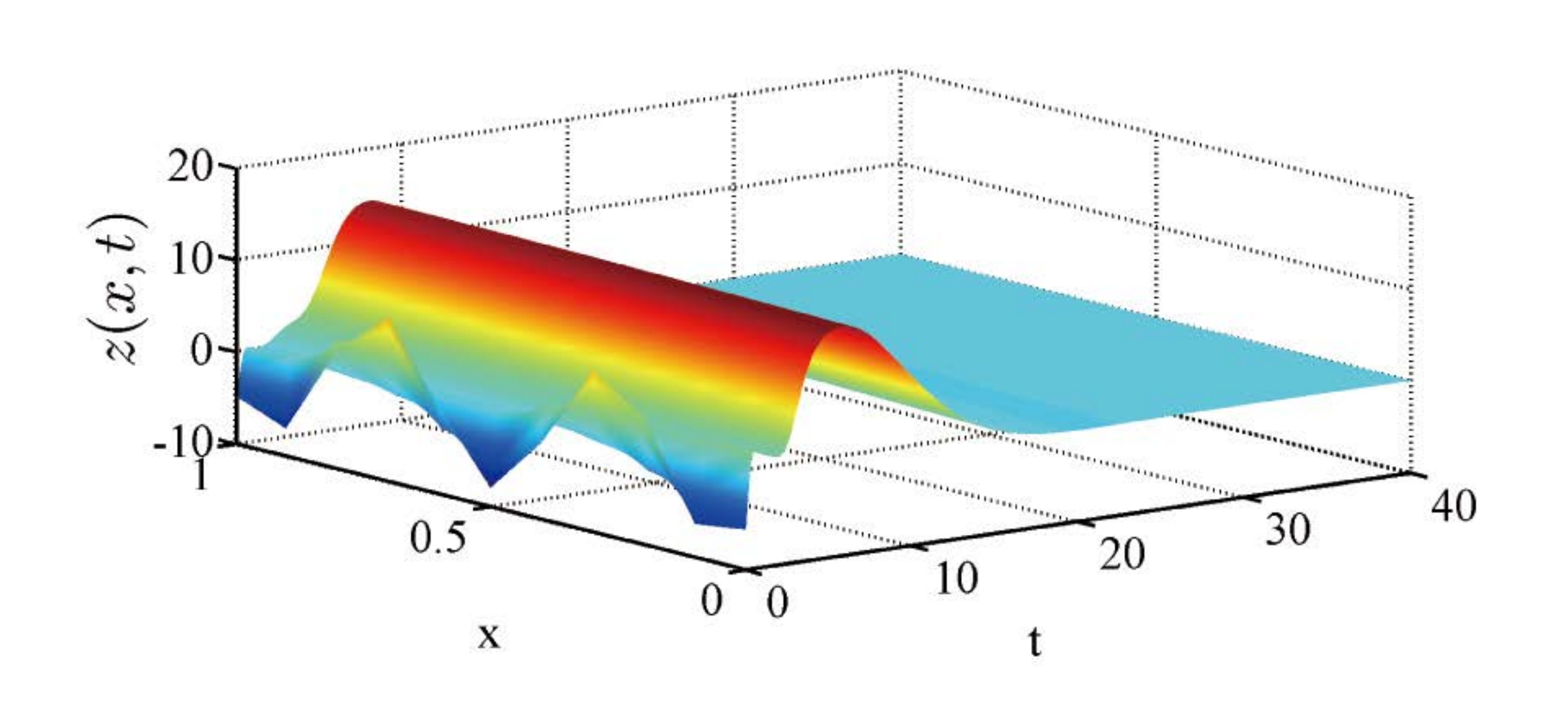}}
  \centerline{(b) $\hat D(0)=1.5$}
\end{minipage}
\caption{The evolution of the plant state $z(x,t)$ under the delay-adaptive control $U_{\rm d}(t)$ with $\hat D(0)=0.25$ or $\hat D(0)=1.5$.}
\label{fig:z}
\end{figure}
\begin{figure}[!h]
\begin{minipage}{0.49\linewidth}
  \centerline{\includegraphics[width=4.8cm]{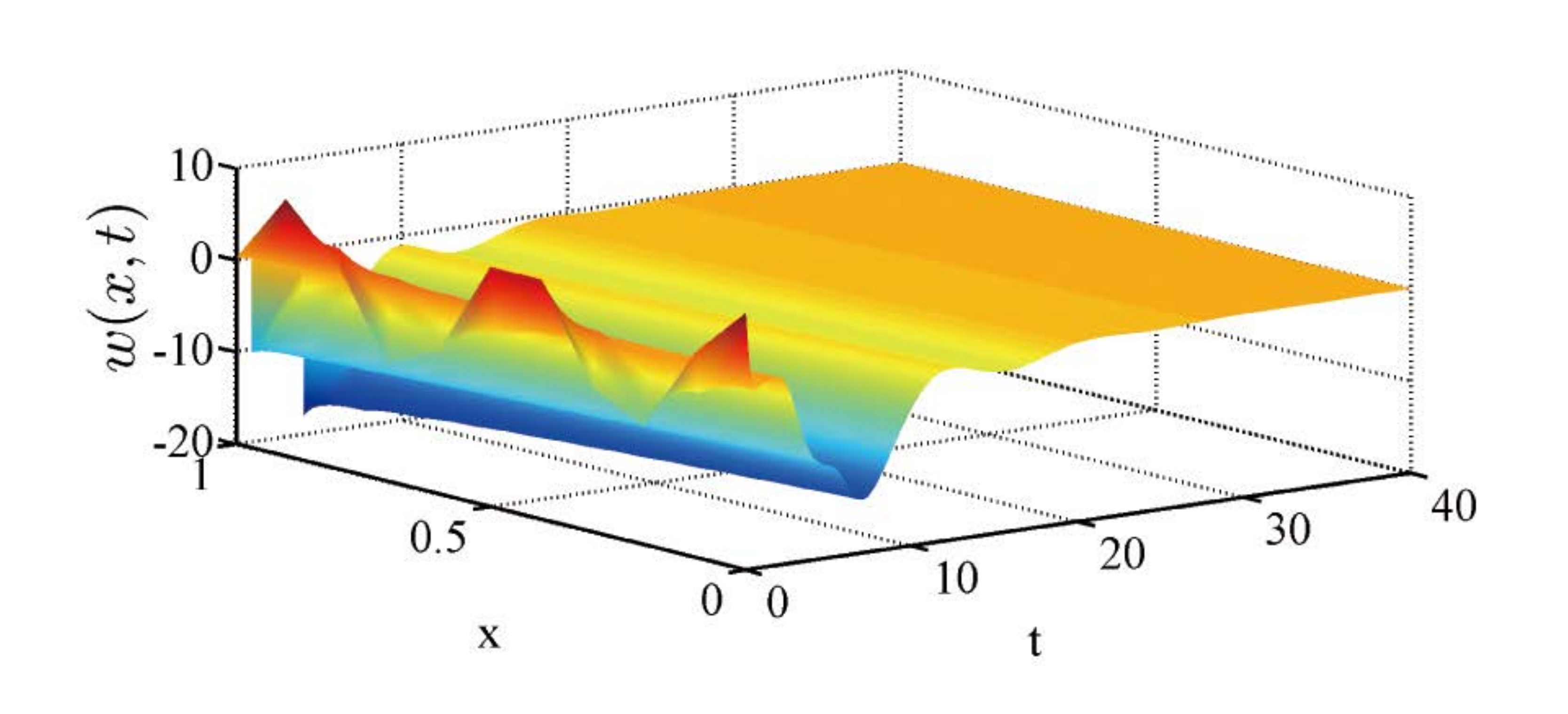}}
  \centerline{(a) $\hat D(0)=0.25$}
\end{minipage}
\hfill
\begin{minipage}{.49\linewidth}
  \centerline{\includegraphics[width=4.8cm]{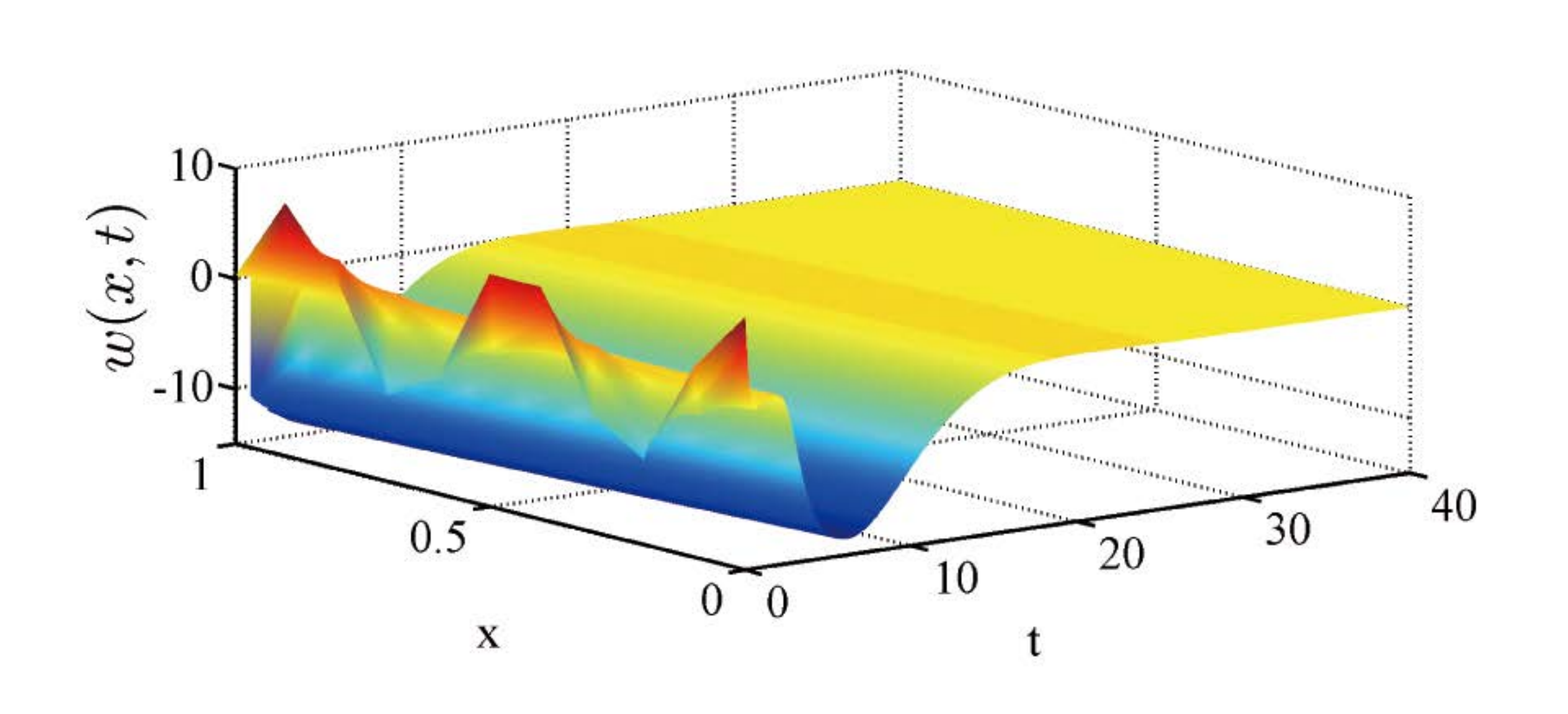}}
  \centerline{(b) $\hat D(0)=1.5$}
\end{minipage}
\caption{The evolution of the plant state $w(x,t)$ under the delay-adaptive control $U_{\rm d}(t)$ with $\hat D(0)=0.25$ or $\hat D(0)=1.5$.}
\label{fig:w}
\end{figure}
\begin{figure}[!h]
\begin{minipage}{0.49\linewidth}
  \centerline{\includegraphics[width=4.8cm]{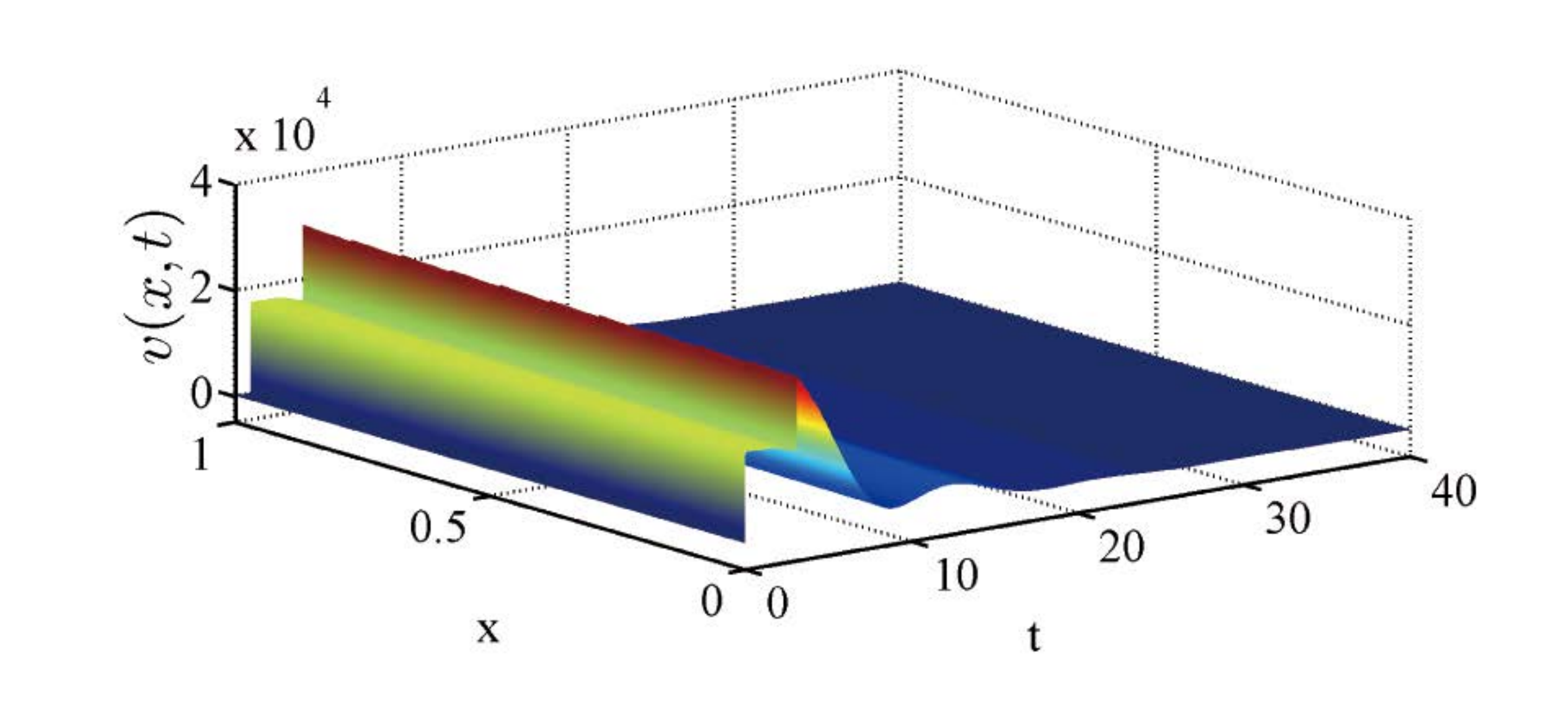}}
  \centerline{(a) $\hat D(0)=0.25$}
\end{minipage}
\hfill
\begin{minipage}{.49\linewidth}
  \centerline{\includegraphics[width=4.8cm]{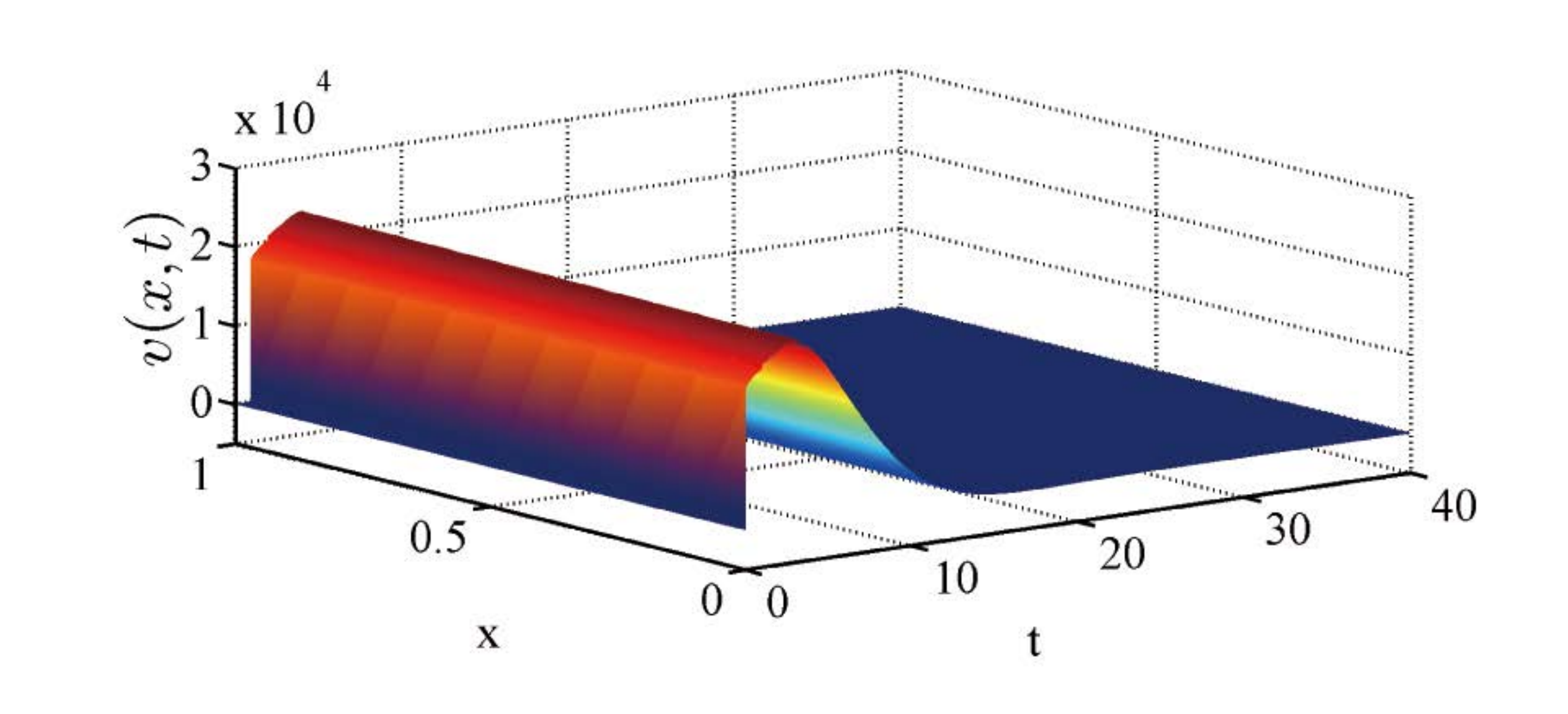}}
  \centerline{(b) $\hat D(0)=1.5$}
\end{minipage}
\caption{The evolution of the actuator state $v(x,t)$ under the delay-adaptive control $U_{\rm d}(t)$ with $\hat D(0)=0.25$ or $\hat D(0)=1.5$.}
\label{fig:v}
\end{figure}

It is easy to obtain the  oscillation energy of the cable in DCV $\frac{\rho}{2}\|u_t(\cdot,t)\|^2+\frac{T_0}{2}\|u_x(\cdot,t)\|^2
=\frac{\rho}{8}\|w(\cdot,t)+z(\cdot,t)\|^2+\frac{\rho}{8}\|w(\cdot,t)-z(\cdot,t)\|^2$ by recalling \eqref{eq:t11}--\eqref{eq:barx}. Therefore, it is known from the results $z(x,t)$ and $w(x,t)$ in Figures \ref{fig:z} and \ref{fig:w} that the oscillation energy of the cable decreases to zero fast under the proposed delay-adaptive controller. One can also observe from Figure \ref{fig:X} that the regulation performance of the ODE, i.e., the payload, is satisfied.
\section{Conclusion and future work}\label{sec:conclusion}
 In this paper, we have proposed a delay-adaptive
control scheme for a $2\times 2$
hyperbolic PDE-ODE system, where the input delay is arbitrarily large and unknown. The controller consists
of a nominal delay-compensated control law, a batch least-squares identifier for the unknown delay, and a triggering mechanism to determine the
update times of the identifier. We have proved that the proposed control
guarantees: {1) the avoidance of Zeno phenomenon; 2) the identification of the unknown boundary input delay in finite time in most situations; 3) the exponential regulation of both the  plant and  the actuator states
to zero. The effectiveness of the proposed
design is verified by numerical simulation in the control application of a deep-sea construction vessel subject to input delay. This paper only deals with the state-feedback adaptive-delay control design for coupled hyperbolic PDEs whose actuator states and plant states are measurable.
In our future work, the output-feedback control design with unmeasurable actuator and plant states will be considered.}
\section{Appendix}
\subsection { Gain kernels PDEs and their associated boundary conditions }  \label{ap:kernel}
\textbf{(a) First-step transformation }

\setcounter{equation}{0}
\renewcommand{\theequation}{A.\arabic{equation}}

The  backstepping transformation \eqref{eq:contran1a} and \eqref{eq:contran1b} lead to the following  PDE-ODE system of kernel conditions for $\varphi,\phi,\Psi,\Phi,\gamma$ and $\lambda$. These conditions are derived by mapping the original plant to the first intermediate system.
\begin{align}
&{q_2}{{\varphi }_y}(x,y) - {q_1}{{\varphi }_x}(x,y)  - ({d_4} - {d_1})\varphi (x,y)\notag\\&- {d_2}\phi (x,y)=0,\label{1}\\
&{  {q_1}{{\phi }_x}(x,y) {+} {q_1}{{\phi }_y}(x,y)} {+} {d_3}\varphi (x,y)=0,\\
& {q_2}{{\Psi }_x}(x,y)  - {q_1}{{\Psi }_y}(x,y) + ({d_4} - {d_1})\Psi (x,y)\notag\\& - {d_3}\Phi (x,y)=0,\\
&{{q_2}{\Phi _x}(x,y)+{q_2}{\Phi _y}(x,y)  - {d_2}\Psi (x,y)}=0,\\
&  {q_1}{\gamma}'(x) +{\gamma}(x)(A- {d_1}I_n) + {q_1}{C}\phi (x,0)=0,\\
& {q_2}\lambda '(x)-{\lambda}(x)(A - {d_4}I_n) - {q_1}{C}\Psi (x,0)=0,\label{eq:kerf1}
\end{align}
with the boundary conditions
\begin{align}
&\varphi (x,x)=\frac{d_2}{{q_1} + {q_2}},\label{eq:ker1}\\
& {q_2}\varphi (x,0) + {q_1}p\phi (x,0)={\gamma}(x)B,\\
&\Psi (x,x)=\frac{-d_3 }{{q_1} + {q_2}} ,\label{eq:Psi}\\
& {q_2}\Phi (x,0) + {q_1}p\Psi (x,0)= \lambda (x)B,\\
&{\lambda (0)=K^T},\label{eq:kerf}\\
& {{\gamma}(0)=C-pK^T},    \label{eq:ker6}
\end{align}
where  $I_n$ is an identity matrix with dimension $n$.

Similarly, the boundary conditions of  the gain kernels associated with the inverse   backstepping transformation \eqref{eq:Icontran1a}, \eqref{eq:Icontran1b}, namely, $\bar\varphi,\bar\phi,\bar\gamma,\bar\Psi,\bar\Phi$ and $\bar\lambda$ are given by
\begin{align}
&{q_2}{{\bar \Psi }_x}(x,y) - {q_1}{{\bar \Psi }_y}(x,y)  + ({d_4} - {d_1})\bar \Psi (x,y)\notag\\& + {d_3}\bar \phi (x,y)=0,\label{eq:Iker1}\\
&  {q_1}{{\bar \phi }_x}(x,y) +{q_1}{{\bar \phi }_y}(x,y)- {d_2}\bar \Psi (x,y) =0,\\
&{q_2}{{\bar \varphi }_y}(x,y) - {q_1}{{\bar \varphi }_x}(x,y)  - ({d_4} - {d_1})\bar \varphi (x,y)\notag\\& + {d_2}\bar \Phi (x,y)= 0,\\
&{q_2}{\bar \Phi _y}(x,y) + {q_2}{\bar \Phi _x}(x,y) + {d_3}\bar \varphi (x,y) = 0,\\
& {q_1}{\bar\gamma}' (x) -{\bar\gamma}(x)(A+BK^T + {d_1}I_n) - {d_2}\bar\lambda(x) =0,\\
& {q_2}\bar\lambda '(x)+\bar\lambda (x)(A+BK^T+d_4I_n) +{d_3}{\bar\gamma}(x) =0,
\end{align}
with the boundary conditions
\begin{align}
 &\bar \Psi (x,x)=-\frac{d_3}{{q_1} + {q_2}},\\
 &   {q_1}p\bar \phi (x,0)+{q_2}\bar \varphi (x,0) ={\bar\gamma}(x)B,\\
 & \bar \varphi (x,x) =\frac{{d_2}}{{q_1} + {q_2}},\\
 & {q_2}\bar \Phi (x,0) + {q_1}p\bar \Psi (x,0)=\bar\lambda (x)B,\\
 &{\bar\lambda (0)=-K^T},\\
&{{\bar\gamma}(0) = pK^T-{C}.}\label{eq:Ikerf}
\end{align}

The set of equations   \eqref{1}--\eqref{eq:ker6} and \eqref{eq:Iker1}--\eqref{eq:Ikerf} are well-known for  coupled linear heterodirectional hyperbolic
PDE-ODE systems, and their well-posedness has been proved in Theorem 4.1 of \cite{Meglio2017Stabilization}.

\textbf{(b) Second-step transformation }

The gain kernels $K_1,K_2$ and $\eta$ are defined below:
\begin{align}
& dK_{1x}(x,y)+q_1K_{1y}(x,y)=-d_1K_1(x,y),\label{eq:K1con1}\\
&dK_{2x}(x,y)-q_2K_{2y}(x,y)=-d_4K_2(x,y),\label{eq:K2con1}\\
&{d}\eta'(x)A_{\rm m}^{-1}+\eta(x)=0,\label{eq:etacon1}
\end{align}
with the boundary conditions
\begin{align}
& K_1(1,y)=\frac{1}{c_0}{\bar\Psi}(1,y)-\frac{1}{c_0}q{\bar\phi}(1,y),\label{eq:K1con2}\\
& K_2(1,y)=\frac{1}{c_0}{\bar\Phi}(1,y)-\frac{1}{c_0}q{\bar\varphi}(1,y),\label{eq:K2con2}\\
 &K_1(x,1)=\frac{qq_2}{q_1}K_2(x,1),\label{eq:K2con3}\\
 &q_1pK_1(x,0)+q_2K_2(x,0)=\eta(x)B,\label{eq:K1con3}\\
&\eta(1)=- \frac{1}{c_0}q\bar\gamma (1) +  \frac{1}{c_0}\bar\lambda (1).\label{eq:etacon2}
\end{align}
The proof of well-posedness of \eqref{eq:K1con1}--\eqref{eq:etacon2} is given in Lemma 2 in \cite{AuriolACC2022}. To derive  conditions \eqref{eq:K1con1}--\eqref{eq:etacon2}, one needs to consider  \eqref{eq:targ8} and \eqref{eq:secondtr}. Hence, \eqref{eq:tar9} holds straightforwardly under the conditions \eqref{eq:K1con2}, \eqref{eq:K2con2}, \eqref{eq:etacon2}. Taking the time and spatial derivatives of \eqref{eq:secondtr}, inserting the results into \eqref{eq:tar10}, recalling \eqref{eq:targ5}--\eqref{eq:targ4}, \eqref{eq:tar9}, one obtains
\begin{align}
&u_t(x,t)+du_x(x,t)-q_2K_2(x,1)c_0u(1,t)\notag\\
=&v_t(x,t)+\int_0^1 K_1(x,y)\alpha_t(y,t)dy+\int_0^1 K_2(x,y)\beta_t(y,t)dy\notag\\&+d v_x(x,t)+d\int_0^1 K_{1x}(x,y)\alpha(y,t)dy\notag\\&+d\int_0^1 K_{2x}(x,y)\beta(y,t)dy\notag\\&+\eta(x)\dot{X}(t)+d\eta'(x){X}(t)-q_2K_2(x,1)c_0u(1,t)\notag\\
=&v_t(x,t)-q_1\int_0^1 K_1(x,y)\alpha_x(y,t)dy\notag\\&+d_1\int_0^1 K_1(x,y)\alpha(y,t)dy\notag\\&+q_2\int_0^1 K_2(x,y)\beta_x(y,t)dy+d_4\int_0^1 K_2(x,y)\beta(y,t)dy\notag\\&+d v_x(x,t)+d\int_0^1 K_{1x}(x,y)\alpha(y,t)dy\notag\\&+d\int_0^1 K_{2x}(x,y)\beta(y,t)dy+\eta(x)(A_{\rm m}{X}(t)+B\beta(0,t))\notag\\&+d\eta'(x){X}(t)-q_2K_2(x,1)c_0u(1,t)\notag\\
=&\left(q_2K_2(x,1)q-q_1K_1(x,1)\right)\alpha(1,t)\notag\\&+\int_0^x \left(q_1K_{1y}(x,y)+d_1K_1(x,y)+ dK_{1x}(x,y)\right)\alpha(y,t)dy\notag\\&-\left(q_2K_2(x,0)+q_1K_1(x,0)p-\eta(x)B\right)\beta(0,t)\notag\\&+\int_0^x \left(d_4K_2(x,y)-q_2K_{2y}(x,y)+dK_{2x}(x,y)\right)\beta(y,t)dy\notag\\& +\left(\eta(x)A_{\rm m}+d\eta'(x)\right){X}(t)=0.\label{eq:utux}
\end{align}
The necessary and sufficient conditions for  \eqref{eq:utux} to hold are given as  \eqref{eq:K1con1}--\eqref{eq:etacon1}, \eqref{eq:K2con3}, \eqref{eq:K1con3}.

\textbf{(c) Third-step transformation }

The derivation of the gain kernels PDE   $R$ and $R^{I}$ is performed as follows. Substituting the time and spatial derivatives of \eqref{eq:third} into \eqref{eq:tar10} and  recalling \eqref{eq:tar12}--\eqref{eq:tar14}, we have
\begin{align}
&u_t(x,t)+du_x(x,t)-q_2K_2(x,1)c_0u(1,t)\notag\\
=&\hat u_t(x,t)+\int_x^1 R(x,y) \hat u_t(y,t)dy+d\hat u_x(x,t)\notag\\&+d\int_x^1 R_x(x,y) \hat u(y,t)dy-dR(x,x) \hat u(x,t)\notag\\&-q_2K_2(x,1)c_0\hat u(1,t)\notag\\
=&-d\int_x^1 R(x,y) \hat u_x(y,t)dy+d\int_x^1 R_x(x,y) \hat u(y,t)dy\notag\\&-dR(x,x) \hat u(x,t)-q_2K_2(x,1)c_0\hat u(1,t)\notag\\
=&-dR(x,1)\hat u(1,t)+dR(x,x)\hat u(x,t)\notag\\&+d\int_x^1 R_y(x,y) \hat u(y,t)dy+d\int_x^1 R_x(x,y) \hat u(y,t)dy\notag\\&-dR(x,x) \hat u(x,t)-q_2K_2(x,1)c_0\hat u(1,t)\notag\\
=&-(dR(x,1)+q_2K_2(x,1)c_0)\hat u(1,t)\notag\\&+d\int_x^1 (R_x(x,y)+R_y(x,y)) \hat u(y,t)dy\notag\\
=&0.\label{eq:thirdcon}
\end{align}
For \eqref{eq:thirdcon} to hold, the following equality must be satisfied:
\begin{align}
&R_x(x,y)+R_y(x,y)=0,\\
&dR(x,1)=-q_2c_0K_2(x,1),
\end{align}
which obviously admits a unique solution $$R(x,y)=-\frac{q_2c_0}{d}K_2(x-y+1,1).$$

Similarly, substituting the time and spatial derivatives of \eqref{eq:thirdinv} into \eqref{eq:tar13} and  recalling \eqref{eq:tar10}, we have
\begin{align*}
&\hat u_t(x,t)+d\hat u_x(x,t)\\
=&u_t(x,t)+\int_x^1 P(x,y) u_t(y,t)dy+du_x(x,t)\notag\\&+d\int_x^1 P_x(x,y) u(y,t)dy-dP(x,x) u(x,t)\\
=&q_2K_2(x,1)c_0u(1,t)-d\int_x^1 P(x,y) u_x(y,t)dy\notag\\&+\int_x^1 P(x,y)q_2K_2(y,1)c_0dyu(1,t)\notag\\&+d\int_x^1 P_x(x,y) u(y,t)dy-dP(x,x) u(x,t)\\
=&q_2K_2(x,1)c_0u(1,t)-dP(x,1) u(1,t)+dP(x,x) u(x,t)\notag\\&+d\int_x^1 P_y(x,y) u(y,t)dy+\int_x^1 P(x,y)q_2K_2(y,1)c_0dyu(1,t)\\&+d\int_x^1 P_x(x,y) u(y,t)dy-dP(x,x) u(x,t)\\
=&\big(q_2K_2(x,1)c_0-dP(x,1)\notag\\&+\int_x^1 P(x,y)q_2K_2(y,1)c_0dy\big) u(1,t)\\
&+d\int_x^1 (P_y(x,y)+P_x(x,y)) u(y,t)dy=0.
\end{align*}
The equation above suggests that the kernel function  $P$ in the inverse transformation \eqref{eq:thirdinv} satisfies the following PDE with the corresponding boundary value:
\begin{align}
&P_y(x,y)-P_x(x,y)=0,\\
&P(x,1)=\frac{q_2}{d}K_2(x,1)c_0+\frac{1}{d}\int_x^1 P(x,y)q_2K_2(y,1)c_0dy,
\end{align}
whose well-posedness can be obtained by the method of characteristics.
\subsection{Expressions of the controller gain functions $M_1$, $M_2$, $M_3$, $M_4$}
\setcounter{equation}{0}
\renewcommand{\theequation}{B.\arabic{equation}}

The functions $M_1$, $M_2$, $M_3$, and $M_4$ are given as follows,
\begin{align*}
M_1(y)=&\int_0^1 R(0,s)  K_1(s,y)ds-K_1(0,y)\notag\\&+\int_0^1R(0,s_1) \int_{s_1}^1  P(s_1,s) K_1(s,y)dsd{s_1}\\&-\int_y^1 \bigg[\int_0^1 R(0,y)  K_1(y,s)dy-K_1(0,s)\notag\\&+\int_0^1R(0,s_1) \int_{s_1}^1  P(s_1,y) K_1(y,s)dyd{s_1}\bigg] {\phi}(s,y)ds\\&-\int_y^1 \bigg[\int_0^1 R(0,y)   K_2(y,s)dy-K_2(0,s)\notag\\&+\int_0^1 R(0,s_1) \int_{s_1}^1 P(s_1,y) K_2(y,s)dyd{s_1}\bigg] {\Psi}(s,y) ds,\\
M_2(y)=&-\int_y^1 \bigg[\int_0^1 R(0,y)  K_1(y,s)dy-K_1(0,s)\notag\\&+\int_0^1R(0,s_1) \int_{s_1}^1  P(s_1,y) K_1(y,s)dyd{s_1}\bigg]{\varphi}(s,y)ds\\&+\int_0^1 R(0,s)   K_2(s,y)ds-K_2(0,y)\notag\\&+\int_0^1 R(0,s_1) \int_{s_1}^1 P(s_1,s) K_2(s,y)dsd{s_1}\\&-\int_y^1\bigg[\int_0^1 R(0,y)   K_2(y,s)dy-K_2(0,s)\notag\\&+\int_0^1 R(0,s_1) \int_{s_1}^1 P(s_1,y) K_2(y,s)dyd{s_1}\bigg] {\Phi}(s,y)ds,\\
M_3(y)=&R(0,y)+\int_0^yR(0,s)  P(s,y)ds,\\
M_4=&\int_0^1 R(0,y) \eta(y)dy-\eta(0)\\&+\int_0^1 R(0,y) \int_y^1 P(y,s) \eta(s)dsdy\\& - \int_0^1\bigg[\int_0^1 R(0,s)   K_2(s,y)ds-K_2(0,y)\notag\\&+\int_0^1 R(0,s_1) \int_{s_1}^1 P(s_1,s) K_2(s,y)dsd{s_1}\bigg]\lambda (y)dy\\&-\int_0^1 \bigg[\int_0^1 R(0,s)  K_1(s,y)ds-K_1(0,y)\notag\\&+\int_0^1R(0,s_1) \int_{s_1}^1  P(s_1,s) K_1(s,y)dsd{s_1}\bigg]\gamma (y)dy,
\end{align*}
where $K_1,K_2,\eta,R,P$ are parametrized by the unknown delay  $D=\frac{1}{d}$ according to the conditions defined in Appendix A.
\subsection{Norms equivalence between the original and the target systems' states}
\setcounter{equation}{0}
\renewcommand{\theequation}{C.\arabic{equation}}

From \eqref{eq:contran1a}--\eqref{eq:Icontran1b}, \eqref{eq:third}, \eqref{eq:thirdinv}, we get
\begin{align}
\|\alpha (\cdot,t)\|^2\le& \eta_1\bigg(\|z(\cdot,t)\|^2+\|w(\cdot,t)\|^2+|X(t)|^2\bigg),\label{eq:e1}\\
\|\beta (\cdot,t)\|^2\le& \eta_2\bigg(\|z(\cdot,t)\|^2+\|w(\cdot,t)\|^2+|X(t)|^2\bigg),\label{eq:e2}\\
\|z (\cdot,t)\|^2\le& \eta_3\bigg(\|\alpha(\cdot,t)\|^2+\|\beta(\cdot,t)\|^2+|X(t)|^2\bigg),\\
\|w(\cdot,t)\|^2\le& \eta_4\bigg(\|\alpha(\cdot,t)\|^2+\|\beta(\cdot,t)\|^2+|X(t)|^2\bigg),\\
\|u(x,t)\|^2\le& \eta_5\|\hat u(x,t)\|^2,\label{eq:e5}\\
\|\hat u(x,t)\|^2\le& \eta_6\|u(x,t)\|^2,\label{eq:e6}
\end{align}
where
\begin{align*}
\eta_1=&4\bigg(1+\int_0^1\int_0^x {\phi}(x,y)^2dydx+\int_0^1\int_0^x {\varphi}(x,y)^2dydx\notag\\&+\int_0^1\gamma (x)^2dx\bigg),\\
\eta_2=&4\bigg(1+\int_0^1\int_0^x {\Psi}(x,y)^2dydx+\int_0^1\int_0^x {\Phi}(x,y)^2dydx\notag\\&+\int_0^1\lambda (x)^2dx\bigg),\\
\eta_3=&4\bigg(1+\int_0^1\int_0^x {\bar\phi}(x,y)^2dydx+\int_0^1\int_0^x {\bar\varphi}(x,y)^2dydx\notag\\&+\int_0^1\bar\gamma (x)^2dx\bigg),\\
\eta_4=&4\bigg(1+\int_0^1\int_0^x {\bar\Psi}(x,y)^2dydx+\int_0^1\int_0^x {\bar\Phi}(x,y)^2dydx\notag\\&+\int_0^1\bar\lambda (x)^2dx\bigg),\\
\eta_5=&2\bigg(1+\int_0^1\int_x^1 R(x,y)^2dydx\bigg),\\
\eta_6=&2\bigg(1+\int_0^1\int_x^1 P(x,y)^2dydx\bigg).
\end{align*}
Recalling \eqref{eq:secondtr}, together with \eqref{eq:e1}, \eqref{eq:e2}, \eqref{eq:e5}, \eqref{eq:e6}, yields
\begin{align}
||v(\cdot,t)||^2\le&4\bigg(\eta_5+\int_0^1\int_0^1 K_1(x,y)^2dydx\notag\\&+\int_0^1\int_0^1 K_2(x,y)^2dydx+\int_0^1\eta(x)^2dx\bigg)\big(\|\hat u(\cdot,t)\|^2\notag\\&+||\alpha(\cdot,t)||^2+||\beta(\cdot,t)||^2+|{X}(t)|^2\big),\\
||\hat u(\cdot,t)||^2\le&4\eta_6(\eta_1+\eta_2+1)\bigg(1+\int_0^1\int_0^1 K_1(x,y)^2dydx\notag\\&+\int_0^1\int_0^1 K_2(x,y)^2dydx+\int_0^1\eta(x)^2dx\bigg)\big(||v(\cdot,t)||^2\notag\\
&+\|z(\cdot,t)\|^2+\|w(\cdot,t)\|^2+|X(t)|^2\big).
\end{align}
Defining
\begin{align}
\bar\Omega(t)=\|\alpha[t]\|^2+\|\beta[t]\|^2+\|\hat u[t]\|^2+ {X}(t)^2\label{eq:bom}
\end{align}
one obtains
\begin{align}\label{equi}
\xi_1 \Omega(t)\le \bar\Omega(t)\le \xi_2 \Omega(t)
\end{align}
where
\begin{align}
\xi_1=&1/\bigg(1+\eta_3+\eta_4+4\eta_5+4\int_0^1\int_0^1 K_1(x,y)^2dydx\notag\\&+4\int_0^1\int_0^1 K_2(x,y)^2dydx+4\int_0^1\eta(x)^2dx\bigg),\label{eq:xi1}\\
\xi_2=&1+\eta_1+\eta_2\notag\\&+4\eta_6(\eta_1+\eta_2+1)\bigg(1+\int_0^1\int_0^1 K_1(x,y)^2dydx\notag\\&+\int_0^1\int_0^1 K_2(x,y)^2dydx+\int_0^1\eta(x)^2dx\bigg).\label{eq:xi2}
\end{align}

\end{document}